\title[Spectra with unusual invariant ideals]{On the homotopy groups of $E(n)$--local spectra\\ with unusual invariant ideals}
\author{Hirofumi Nakai}
\address{Department of Mathematics\\
Faculty of Technology\\
Musashi Institute of Technology\\\newline 
Tokyo 158-8557\\Japan
}
\email{nakai@ma.ns.musashi-tech.ac.jp}
\urladdr{}
\author{Katsumi Shimomura}
\address{Department of Mathematics\\
Faculty of Science\\
Kochi University\\\newline
Kochi 780-8520\\Japan}
\email{katsumi@math.kochi-u.ac.jp}
\urladdr{}
\def\cnewtheorem#1[#2]#3{\newtheorem{#1}{#3}[section]
\expandafter\let\csname c@#1\endcsname\c@thm}
\numberwithin{equation}{section}
\newtheorem{thm}{Theorem}[section]
\theoremstyle{definition}
\let\c@equation\c@thm
\newcommand{\ox}{\otimes}
\newcommand{\cotensor}{\Box}
\newcommand{\Frac}[2]{\displaystyle\frac{#1}{#2}}
\newcommand{\ints}{\mathbb{Z}}
\begin{document}

\begin{htmlabstract}
Let E(n) and T(m) for nonnegative integers n and m
denote the Johnson--Wilson and the Ravenel spectra, respectively.  
Given a spectrum whose E(n)<sub>*</sub>&ndash;homology is
E(n)<sub>*</sub>(T(m))/(v<sub>1</sub>,&hellip;,v<sub>n-1</sub>),
then each homotopy group of it estimates the order of each homotopy
group of L<sub>n</sub>T(m).  We here study the E(n)&ndash;based Adams
E<sub>2</sub>&ndash;term of it and present that the determination of the
E<sub>2</sub>&ndash;term is unexpectedly complex for odd prime case. At
the prime two, we determine the E<sub>&infin;</sub>&ndash;term for
&pi;<sub>*</sub>(L<sub>2</sub>T(1)/(v<sub>1</sub>)), whose computation
is easier  than that of &pi;<sub>*</sub>(L<sub>2</sub>T(1)) as we expect.
\end{htmlabstract}

\begin{abstract}
Let $E(n)$ and $T(m)$ for nonnegative integers $n$ and $m$ denote
the Johnson--Wilson and the Ravenel spectra, respectively.  Given a spectrum
whose $E(n)_{*}$--homology is $E(n)_*(T(m))/(v_1,\ldots,v_{n-1})$, then
each homotopy group of it estimates the order of each homotopy group
of $L_nT(m)$.  We here study the $E(n)$--based Adams $E_2$--term of it and
present that the determination of the $E_2$--term is unexpectedly complex
for odd prime case. At the prime two, we determine the $E_{\infty}$--term
for $\pi_*(L_2T(1)/(v_1))$, whose computation is easier  than that of
$\pi_*(L_2T(1))$ as we expect.
\end{abstract}

\maketitle

\section{Introduction}

In \cite{r:book}, Ravenel has constructed
the homotopy associative commutative ring spectrum $T(m)$ 
as a summand of $p$--component of the Thom spectrum associated with the map
$\Omega SU(p^m) \to BU$.
It is extensively used in \cite[Section 7]{r:book}
to compute the homotopy groups of spheres in terms of
``the method of infinite descent''.
The Adams--Novikov $E_{2}$--term converging to the stable homotopy groups
$\pi_*(T(m))$
is described  by use of the Hopf algebroid
$(BP_{*},\Gamma(m+1))$ (cf~\cite[Definition 7.1.1]{r:book}).
In particular, the $0$--th line is
\[
\Ext_{\Gamma(m+1)}^{0}(BP_{*},BP_{*})=\ints_{(p)}[v_{1}, \ldots ,
v_{m}]\subset BP_*=\ints _{(p)}[v_{1}, \ldots ],
\]
and the more the value of $m$, the more primitives we obtain.
Since  $v_{k}$ for $1 \le k \le m$ 
is a permanent cycle of the spectral sequence, we obtain spectra $T(m)/(v_k)$ and $T(m)/(v_k,v_l)$ for $1 \le k,l \le m$ (see \fullref{selfmap-T}.)
Here $T(m)/J$ for an ideal $J$ of $BP_*$ denotes a spectrum such that $BP_*(T(m)/J)=BP_*/J$.

Let $BP\langle n\rangle$ denote the Johnson--Wilson ring spectrum with
$BP\langle n\rangle_*=\ints_{(p)}[v_{1}, \ldots ,$ $v_{n}]$ 
and put $E(n)=v_n^{-1}BP\langle n\rangle$ as usual. 
Then we have the $E(n)$--based Adams spectral sequence $E_r^{s,t}(X)\Rightarrow \pi_*(L_nX)$ for a spectrum $X$, 
whose $E_2$--term is 
$E_2^*(X)=\Ext_{E(n)_*(E(n))}^*$ $(E(n)_*,E(n)_*(X))$.  
Here $L_n$ denotes the Bousfield localization functor with respect to $E(n)$.
Note that $BP_*(T(m))=BP_*[t_1,\ldots,t_m]\subset BP_*[t_1,\ldots]=BP_*(BP)$.
In order to study the $E_2$--term for a spectrum $X$ with
$E(n)_*(X)=E(n)_*/J[t_1,\ldots,t_m]$ for an ideal $J$ of $E(n)_*$, we introduce the generalized Johnson--Wilson spectrum $E_m(n)=v_n^{-1}BP\langle n+m\rangle$. 
Then $$\Sigma(n,m)=E_m(n)_*\ox_{BP_*}BP_*[t_{m+1}, t_{m+2},\ldots]\ox_{BP_*}E_m(n)_*$$ is a Hopf algebroid over $E_m(n)_*$, and
 the $E(n)$--based Adams $E_2$--term $E_2^*(X)$ is isomorphic to $\Ext_{\Sigma(n,m+1)}^*(E_m(n)_*,E_m(n)_*/J)$, which we denote $\Ext^*(E_m(n)_*/J)$, by a similar change-of-rings theorem of Hovey and Sadofsky \cite{hs}.

Consider $J_n$ be the sequence $v_1,v_2,\ldots, v_{n-1}$. 
Then $T(m)/(J_n)$ exists if $n\le 2$ as commented above.
Besides, if $L_nT(m)/J$ exists, then the $E(n)$--based Adams $E_2$--term for $\pi_*(L_nT(m)/J)$ is isomorphic to an Ext group $\Ext^*(E(n)_*/J)$.
Consider the long exact sequence of Ext groups associated to the short exact sequence 
\[
0
\arrow{e}
E_{m}(n)_{*}/(J_{n})
\arrow{e}
p^{-1}E_{m}(n)_{*}/(J_{n})
\arrow{e}
E_{m}(n)_{*}/(p^{\infty},J_{n})
\arrow{e}
0.
\]
Since $\Ext^*(p^{-1}E_{m}(n)_{*}/(J_{n}))=\Q$, \fullref{Ext0_n<=m} implies our first theorem:

\begin{thm}\label{int Ext0_n<=m}
The $\Ext$ group
$\Ext^0(E_m(n)_*/(J_{n}))$ is isomorphic to
$\ints_{(p)}$, and the group $E_2^1(E_m(n)_*/(J_{n}))$ is
isomorphic to the direct sum of
the cyclic module over the ring $\ints_{(p)}[v_{n}^{\pm 1},v_{n+1}, \ldots , v_{m}]$
generated by 
$$\frac{v_{m+1}^{e_{1}} \ldots v_{m+n}^{e_{n}}}{p^{1+\nu(e_{k})}}$$
of order $\smash{p^{1+\nu(e_{k})}}$
with 
$\nu(e_{k})=\min\{ \nu(e_{1}), \ldots , \nu(e_{n}) \}$,
where the integer $\nu(\ell)$ denotes the maximal power of $p$ 
that divides $\ell$.  
\end{thm}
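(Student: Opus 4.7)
The plan is to derive everything from the long exact sequence in $\Ext_{\Sigma(n,m+1)}$ attached to the displayed short exact sequence, combined with the computation of $\Ext^0(E_m(n)_*/(p^\infty,J_n))$ supplied by \fullref{Ext0_n<=m}. Since $p^{-1}E_m(n)_*/(J_n)$ is already rational, its $\Ext^s$ vanishes for $s>0$ and equals $\Q$ in degree $0$, as the excerpt already notes. Hence the long exact sequence collapses to
\[
0 \to \Ext^0(E_m(n)_*/(J_n)) \to \Q \xrightarrow{\;\delta\;} \Ext^0(E_m(n)_*/(p^\infty,J_n)) \to \Ext^1(E_m(n)_*/(J_n)) \to 0,
\]
and the problem reduces to identifying $\delta$ and then taking its kernel and cokernel.

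For the identification, I would invoke \fullref{Ext0_n<=m} to express $\Ext^0(E_m(n)_*/(p^\infty,J_n))$ as the direct sum, over all multi-indices $(e_1,\ldots,e_n)$ of non-negative integers, of cyclic $\ints_{(p)}[v_n^{\pm 1},v_{n+1},\ldots,v_m]$-modules generated by $v_{m+1}^{e_1}\cdots v_{m+n}^{e_n}/p^{1+\nu(e_k)}$; the summand indexed by $(0,\ldots,0)$ is a copy of $\Q/\ints_{(p)}$. By direct inspection of the connecting map for the short exact sequence above, $\delta$ sends $a/p^k\in\Q$ to the class of $a/p^k$ in the $(0,\ldots,0)$-summand, so it factors as the quotient $\Q\twoheadrightarrow\Q/\ints_{(p)}$ followed by the inclusion of that distinguished summand. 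Therefore $\ker\delta=\ints_{(p)}$, yielding the stated $\Ext^0$, while $\Coker\delta$ is obtained from the full direct sum by deleting the $(0,\ldots,0)$-summand, giving the stated description of $E_2^1(E_m(n)_*/(J_n))$.

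The main obstacle here is not the formal manipulation above but the auxiliary result \fullref{Ext0_n<=m} that feeds into it. Establishing that lemma requires, using the coproduct formulas for $t_{m+1},\ldots,t_{m+n}$ in $\Sigma(n,m+1)$, a direct check that each $v_{m+1}^{e_1}\cdots v_{m+n}^{e_n}/p^{1+\nu(e_k)}$ really is a primitive, together with an inductive $p$-divisibility argument to show that these exhaust the invariants and that the precise orders are $p^{1+\nu(e_k)}$. Granting that lemma, the present theorem follows at once as described.
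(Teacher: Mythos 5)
Your proposal is essentially the paper's own argument: the paper deduces the theorem from the long exact sequence of the displayed short exact sequence, the fact that $\Ext^*(p^{-1}E_m(n)_*/(J_n))=\Q$ is concentrated in degree $0$, and \fullref{Ext0_n<=m} (which it establishes in \fullref{Ext_n<=mcase} via the Bockstein-type connecting homomorphism of \fullref{dif-0dim}), exactly as you do. The only cosmetic difference is that the map $\Q\to\Ext^0(E_m(n)_*/(p^\infty,J_n))$ you call $\delta$ is the map induced by the projection rather than a connecting homomorphism, which does not affect the kernel/cokernel identification.
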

For the case where $n>m$, we have an example which has a similar result
to the above theorem
(cf~\fullref{example_0dim_n>m}):

\begin{prop}\label{int 2}
The $E(2)$--based Adams $E_2$--term
$\smash{E_2^0(T(1)/(v_1))}$
is isomorphic to $\ints_{(p)}$
and 
$E_2^1(T(1)/(v_1))$ is the direct sum of
the cyclic module over $\ints_{(p)}$ generated by
$\smash{v_{2}^{\smash[b]{sp^i}}v_{3}^{\smash[b]{tp^j}}/p^{\smash[b]{1+ \min(i,j)}}}$
of order $\smash{p^{\smash[b]{1+ \min(i,j)}}}$.
\end{prop}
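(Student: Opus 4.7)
The plan is to adapt the strategy used to prove \fullref{int Ext0_n<=m} to the case $n=2$, $m=1$, $J=(v_1)$. First I would apply the short exact sequence
\[
0\to E_{1}(2)_{*}/(v_{1})\to p^{-1}E_{1}(2)_{*}/(v_{1})\to E_{1}(2)_{*}/(p^{\infty},v_{1})\to 0
\]
of comodules over $\Sigma(2,2)$ and pass to the associated long exact sequence on Ext. Using the fact that $\Ext^{*}(p^{-1}E_{1}(2)_{*}/(v_{1}))=\Q$ is concentrated in degree~$0$, this long exact sequence collapses to
\[
0\to\Ext^{0}(E_{1}(2)_{*}/(v_{1}))\to\Q\to\Ext^{0}(E_{1}(2)_{*}/(p^{\infty},v_{1}))\to\Ext^{1}(E_{1}(2)_{*}/(v_{1}))\to 0,
\]
so the problem reduces to identifying the primitives in the $p$-torsion quotient.

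For that identification I would record the right units of $\Sigma(2,2)$ modulo $v_1$. Because $t_{1}\notin\Sigma(2,2)$ and every Hazewinkel correction to $\eta_{R}(v_{2})$ and $\eta_{R}(v_{3})$ modulo $I_{\infty}^{2}$ involves either $v_{1}$ or $t_{1}$, one obtains the clean formulas $\eta_{R}(v_{2})\equiv v_{2}+pt_{2}$ and $\eta_{R}(v_{3})\equiv v_{3}+pt_{3}$ modulo~$v_{1}$. Writing $a=sp^{i}$ and $b=tp^{j}$ with $p\nmid s,t$, a direct binomial expansion of $(v_{2}+pt_{2})^{a}(v_{3}+pt_{3})^{b}/p^{k}$ together with the usual $p$-adic valuation bookkeeping---whose leading obstructions are the terms $sp^{i+1}v_{2}^{a-1}v_{3}^{b}t_{2}/p^{k}$ and $tp^{j+1}v_{2}^{a}v_{3}^{b-1}t_{3}/p^{k}$---pins down the condition $k\le 1+\min(i,j)$ for primitivity. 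This identifies $\Ext^{0}(E_{1}(2)_{*}/(p^{\infty},v_{1}))$ as the direct sum of the classes $1/p^{k}$ (corresponding to $a=b=0$) and the claimed generators $v_{2}^{sp^{i}}v_{3}^{tp^{j}}/p^{1+\min(i,j)}$.

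Reading back along the four-term sequence, the kernel of $\Q\to\Ext^{0}(E_{1}(2)_{*}/(p^{\infty},v_{1}))$ is exactly $\ints_{(p)}$, giving $\Ext^{0}=\ints_{(p)}$, while the cokernel is the direct sum of the cyclic $\ints_{(p)}$--modules generated by the nonconstant generators, as claimed. The hard step will be the second one: one has to verify that the deeper Hazewinkel corrections to $\eta_{R}(v_{3})$ beyond $I_{\infty}^{2}$ do not create additional primitive obstructions after dividing by $p^{k}$. The absence of $t_{1}$ in $\Sigma(2,2)$ together with a careful accounting of $p$-adic valuations are what make the $n>m$ case collapse to the same shape of formula as the $n\le m$ case of \fullref{int Ext0_n<=m}.
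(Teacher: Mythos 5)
Your proposal is correct and follows essentially the same route as the paper: the rationalization short exact sequence reduces everything to $\Ext^0(E_{1}(2)_{*}/(p^{\infty},v_{1}))$, and your binomial/valuation analysis of $\eta_R(v_{2}^{sp^i}v_{3}^{tp^j})$ using $\eta_R(v_i)\equiv v_i+pt_i$ mod $(v_1)$ is exactly the computation the paper packages as the connecting homomorphism $\delta$ into the known $\Ext^1(E_{1}(2)_{*}/(p,v_{1}))\cong K(2)_{*}[v_{3}]\otimes\Lambda(h_{i,j})$ (see \fullref{structuremaps_n2m1} and \fullref{example_0dim_n>m}, via Ravenel's Theorem 6.5.6). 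The only cosmetic difference is that you verify primitivity at the cochain level rather than citing the mod-$p$ Ext group, which is what the paper uses to guarantee that the obstruction terms $v_{2}^{a-1}v_{3}^{b}h_{20}$ and $v_{2}^{a}v_{3}^{b-1}h_{30}$ survive the relations imposed in $\Sigma(2,2)$.
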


In these cases, we did not determine $E_2^s$ for $s>1$ since there is an
obstruction, which comes from the generators known as $b_{i,j}$ (see
\eqref{bij}).
This is what we did not expect.
For $p=2$, we have the relation $b_{i,j}=h_{i,j}^2$, which makes possible
to compute for $s>1$.
Since the $E(2)$--based Adams differentials are read off from Mahowald and Shimomura \cite{ms}, we obtain
the $E_\infty$--term.
\eject
\begin{thm}\label{intro3}
Let $p=2$.
The $E(2)$--based Adams $E_{\infty}$--term 
for $\pi_{*}(L_{2}T(1)/(v_{1}))$
is isomorphic to $\ints_{(2)}$ if $s=0$ and is isomorphic to the tensor
product of 
$\Lambda(\rho_{2})$
and the direct sum of
\begin{enumerate}
\item
$\widetilde{v_{2}A[h_{20}]}$,
$v_{3}B[h_{30}]/(h_{30}^{3})$
and 
$v_{3}Bh_{30}h_{31}$
whose elements are of order two,
\item
$M^{0}$
and
$M^{1}$. 
\end{enumerate}
Here the modules are given in \fullref{Shimomura_p=2}.
\end{thm}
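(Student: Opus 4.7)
The plan is in two parts: first compute the $E_2$-term, then read off the Adams differentials from \cite{ms} to obtain the $E_\infty$-term.

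For the $E_2$-term at $n=2$, $m=1$ and $p=2$, I would apply the change-of-rings isomorphism from the introduction to write
\[
E_2^*(T(1)/(v_1)) \cong \Ext_{\Sigma(2,2)}^*(E_1(2)_*, E_1(2)_*/(v_1)),
\]
and then run the Bockstein long exact sequence associated with
\[
0 \to E_1(2)_*/(v_1) \to p^{-1}E_1(2)_*/(v_1) \to E_1(2)_*/(p^\infty, v_1) \to 0,
\]
exactly as in the proof of \fullref{int Ext0_n<=m}. This reduces the problem to $\Ext^*$ of the right-hand quotient, which I would attack by a nested $v_2$-Bockstein starting from $\Ext^*(E_1(2)_*/(p^\infty, v_1, v_2))$. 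After identifying generators, the output should assemble into the $2$-torsion summands $\widetilde{v_{2}A[h_{20}]}$, $v_{3}B[h_{30}]/(h_{30}^{3})$, $v_{3}Bh_{30}h_{31}$ together with the divisibility modules $M^{0}$, $M^{1}$ of \fullref{Shimomura_p=2}. The external factor $\Lambda(\rho_{2})$ arises from the standard primitive $\rho_{2} \in \Ext^{1}$ in the $t_{2}$-direction of the cobar complex.

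The ingredient that makes the higher-$s$ part of this computation tractable at $p=2$ is the cobar-level relation $b_{i,j}=h_{i,j}^{2}$. At odd primes the $b_{i,j}$ are polynomial generators in $\Ext^{2}$ that obstruct the analogue of \fullref{int Ext0_n<=m} in cohomological degree $\ge 2$ (precisely the obstruction noted between \fullref{int 2} and the present theorem); at the prime $2$ they become decomposable, so each Bockstein propagates cleanly up the cohomological filtration and the analysis terminates in finitely many stages.

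Finally, to pass from $E_{2}$ to $E_{\infty}$, I would compare via the multiplication-by-$v_{1}$ cofiber sequence with the $E(2)$-based Adams differentials on $L_{2}T(1)$ computed in \cite{ms}. Classes in $E_{2}^{*}(T(1)/(v_{1}))$ come either from $E_{2}^{*}(T(1))$ modulo $v_{1}$ or from the $v_{1}$-annihilator in $\Ext^{*-1}$, and by naturality the differentials are transported directly from \cite{ms}. The main obstacle will be the bookkeeping in the nested Bocksteins producing $M^{0}$ and $M^{1}$ with the correct orders and the correct action of $h_{20}$, $h_{30}$, $h_{31}$; this is where the bulk of the technical work will lie, and it is the step that the decomposability $b_{i,j}=h_{i,j}^{2}$ at $p=2$ is precisely designed to unlock.
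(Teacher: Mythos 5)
Your overall fracture strategy (split $T(1)/(v_{1})$ into a rational piece and a $2^{\infty}$ piece, then feed in the differentials of \cite{ms}) is the same skeleton as the paper's, but two of your concrete steps do not work. First, the proposed ``nested $v_{2}$--Bockstein starting from $\Ext^{*}(E_{1}(2)_{*}/(p^{\infty},v_{1},v_{2}))$'' is vacuous: $E_{1}(2)=v_{2}^{-1}BP\langle 3\rangle$, so $E_{1}(2)_{*}=\ints_{(2)}[v_{1},v_{2}^{\pm 1},v_{3}]$ has $v_{2}$ invertible and the quotient by $v_{2}$ is zero. The computation that actually produces $M^{0}$, $M^{1}$, $\widetilde{v_{2}A[h_{20}]}$, $v_{3}B[h_{30}]/(h_{30}^{3})$ and $v_{3}Bh_{30}h_{31}$ is a mod--$2$ Bockstein: one starts from the Mahowald--Shimomura input $\Ext(E_{1}(2)_{*}/(2,v_{1}))=K(2)_{*}[v_{3},h_{20}]\ox\Lambda(h_{21},h_{30},h_{31},\rho_{2})$ and computes the connecting homomorphism of the sequence $0\to E_{1}(2)_{*}/(2,v_{1})\to E_{1}(2)_{*}/(2^{\infty},v_{1})\to E_{1}(2)_{*}/(2^{\infty},v_{1})\to 0$ summand by summand (\fullref{summand_1}, \fullref{summand_2}, \fullref{summand_3}, assembled in \fullref{Ext_p=2}); your proposal never invokes this essential input. (A smaller slip: $\rho_{2}$ is not a primitive in the $t_{2}$--direction; it is the degree-zero class represented by $v_{2}^{-5}t_{4}+v_{2}^{-10}t_{4}^{2}$.)

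Second, your passage to $E_{\infty}$ relies on ``the $E(2)$--based Adams differentials on $L_{2}T(1)$ computed in \cite{ms}'', but those are not available: $\pi_{*}(L_{2}T(1))$ at $p=2$ is exactly what remains undetermined (the introduction says so), and what \cite{ms} supplies are the differentials for $L_{2}T(1)/I_{2}$ (their spectrum $D$), namely $d_{3}(v_{3}^{k})=v_{2}^{2}v_{3}^{k-2}h_{20}^{3}$ and its multiplicative consequences. The paper uses these to settle the $E_{\infty}$--term of the $2^{\infty}$ object $L_{2}T(1)/(2^{\infty},v_{1})$ first (\fullref{homotopygroup_L2T1/2inftyv_1}, where a horizontal vanishing line guarantees convergence), and only then deduces \fullref{intro3} from the cofiber sequence $T(1)/(v_{1})\to T(1)/(v_{1})\wedge S\Q\to T(1)/(2^{\infty},v_{1})$: the rational middle term contributes the $\ints_{(2)}$ in $s=0$ and all torsion is read off from the $2^{\infty}$ piece. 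So the order of operations matters: determine the differentials on the $2^{\infty}$ object via the known $I_{2}$--local computation, rather than trying to transport unknown differentials from $L_{2}T(1)$ along the $v_{1}$--cofibration.
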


In \fullref{generalized-JW}, we consider  the Hopf algebroid
$(E_{m}(n)_{*},\Sigma(n,m+1))$ and show a variation of the change-of-rings theorem given in Hovey and Sadofsky \cite{hs}.
In \fullref{MapsSpectra}, we exhibit the formulas for
the structure maps 
(the right unit $\eta_{R}$ and the diagonal maps $\Delta$).
We then observe the existence of spectra of the form $T(m)/J$.
\fullref{Ext_n<=mcase} is devoted to prove \fullref{int Ext0_n<=m} and \fullref{int 2}.
In \fullref{Shimomura_p=2}, we determine
the $E_{\infty}$--term for $\pi_{*}(L_{2}T(1)/(2^{\infty},v_{1}))$.
The homotopy groups $\pi_*(L_2T(1))$ is determined easily if $p$ is odd, and
stays undetermined if $p=2$. The result of this section is the first step to
understand $\pi_*(L_2T(1))$ at the prime two.

{\bf Acknowledgements}\qua
We wish to thank to the organizers of Nishida Conference held on August 2003 
for making arrangement of the publication for the Proceedings. 
We are also grateful to Ippei Ichigi for reading the draft paper carefully 
and for pointing out some misprints. 

\section{A generalized Johnson--Wilson theory}\label{generalized-JW}

Let $BP$ and $BP\langle n\rangle$ denote the Brown--Peterson and the Johnson--Wilson spectra characterized by
$\pi_{*}(BP)
     = BP_* = \ints_{(p)}[v_{1}, \ldots , v_{n} , \ldots]$ and
$\pi_{*}(BP\langle n\rangle)
     = BP\langle n\rangle_*= \ints_{(p)}[v_{1}, \ldots , v_{n}]
           \subset 
           BP_{*}
$
with 
$|v_{n}|=|t_{n}|=2(p^{n}-1)$.
Then the $BP_*$--homology of $BP$ is
$BP_{*}(BP)= BP_{*}[t_{1}, \ldots , t_{n} , \ldots]$,
We put 
$$
E_m(n)=v_n^{-1}BP\langle n+m \rangle
$$
for nonnegative integers $n$ and $m$.
Then 
$$
E_{m}(n)_{*} = E(n)_{*}[v_{n+1}, \ldots ,v_{n+m}] \subset v_{n}^{-1}BP_{*}
.$$
We notice that $E_0(n)$ is the localized Johnson--Wilson spectrum $E(n)$.

Let $\Gamma(m+1)$ (cf Ravenel~\cite[7.1.1]{r:book})
be the $BP_{*}(BP)$--comodule defined by
\[
\Gamma(m+1)
= BP_{*}(BP)/(t_{1}, \ldots , t_{m})
= BP_{*}[t_{m+1},t_{m+2}, \ldots] .
\]
Then the pair $(BP_{*},\Gamma(m+1))$ has the structure of the Hopf
algebroid
inherited from $(BP_{*},BP_{*}(BP))$.
Put 
\[
\Sigma_{m}(n,i)
=
E_{m}(n)_{*} \ox_{BP_{*}} \Gamma(i) \ox_{BP_{*}} E_{m}(n)_{*}.
\]
In particular, 
we write
$$
    \Sigma(n,m+1)=\Sigma_{m}(n,m+1)=E_{m}(n)_{*} \ox_{BP_{*}} \Gamma(m+1) \ox_{BP_{*}} E_{m}(n)_{*}.
$$
The pair 
$(E_{m}(n)_{*},\Sigma_{m}(n,i))$
is a Hopf algebroid
with the structure maps inherited from those of the Hopf algebroid
$(BP_{*}, \Gamma(i))$ for all $i>0$.
Consider the map between Hopf algebroids
$
(E_{m}(n)_{*},\Sigma_{m}(n,1))
\arrow{e}
(E_{m}(n)_{*},\Sigma(n,m+1))
$
induced from the projection
from $BP_{*}(BP)$ to $\Gamma(m+1)$.
The map is normal and that
\begin{equation}
E_{m}(n)_{*}(T(m)) = E_{m}(n)_{*} \cotensor_{\Sigma(n,m+1)}\Sigma_{m}(n,1)
\label{key-equation}
\end{equation}
if $m>0$. 
Here, $T(m)$ denotes the
Ravenel spectrum \cite[6.5.1]{r:book},
which is an associative commutative ring spectrum characterized by
$BP_{*}(T(m))= BP_{*}[t_{1}, \ldots , t_{m}]$.
Since $\Sigma_{m}(n,1)$ is $E_{m}(n)_{*}(E_{m}(n))$,
the change-of-rings theorem \cite[A1.3.12]{r:book} shows the following:

\begin{lem}\label{change-of-rings-1}
There is an isomorphism
$$\Ext_{E_{m}(n)_{*}(E_{m}(n))}(E_{m}(n)_{*},E_{m}(n)_{*}(T(m)))
= \Ext_{\Sigma(n,m+1)}(E_{m}(n)_{*},E_{m}(n)_{*}).
$$
\end{lem}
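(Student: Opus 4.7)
The plan is to reduce the lemma to a direct application of Ravenel's change-of-rings theorem \cite[A1.3.12]{r:book}, relying on the two ingredients already assembled above: the normality of the Hopf algebroid map $(E_m(n)_*, \Sigma_m(n,1)) \to (E_m(n)_*, \Sigma(n,m+1))$ induced by the projection $BP_*(BP) \twoheadrightarrow \Gamma(m+1)$, and the cotensor identification \eqref{key-equation}.

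First I would rewrite the left-hand side of the desired isomorphism using the identification $\Sigma_m(n,1) = E_m(n)_*(E_m(n))$, so that the Ext group becomes $\Ext_{\Sigma_m(n,1)}(E_m(n)_*, E_m(n)_*(T(m)))$. Then I would substitute equation \eqref{key-equation} to recast the coefficient comodule as the cotensor product $E_m(n)_* \cotensor_{\Sigma(n,m+1)} \Sigma_m(n,1)$. At this stage the left-hand side is exactly the input required by the change-of-rings theorem: an Ext group over a smaller Hopf algebroid evaluated on a comodule coinduced from a normal quotient Hopf algebroid.

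Applying A1.3.12 then yields the isomorphism
\[
\Ext_{\Sigma_m(n,1)}\bigl(E_m(n)_*,\, E_m(n)_* \cotensor_{\Sigma(n,m+1)} \Sigma_m(n,1)\bigr) \cong \Ext_{\Sigma(n,m+1)}(E_m(n)_*, E_m(n)_*),
\]
which is precisely the conclusion of the lemma. I do not anticipate a genuine obstacle: the substance of the argument is packaged into the two preparatory facts already in hand, and the lemma is a formal consequence of the change-of-rings theorem. The only point worth checking in passing is that the flatness hypothesis of A1.3.12 is met over $E_m(n)_*$, which holds because $\Sigma_m(n,1)$ is a polynomial, hence faithfully flat, extension of $\Sigma(n,m+1)$ on each side; this is already implicit in the construction of $\Sigma_m(n,1)$ as $E_m(n)_*\otimes_{BP_*}BP_*(BP)\otimes_{BP_*}E_m(n)_*$ with $\Gamma(m+1)$ obtained by killing the polynomial generators $t_1,\ldots,t_m$.
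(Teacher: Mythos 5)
Your argument is exactly the paper's: the lemma is deduced by identifying $\Sigma_m(n,1)$ with $E_m(n)_*(E_m(n))$, invoking the normality of the induced map of Hopf algebroids together with the cotensor identification \eqref{key-equation}, and then applying the change-of-rings theorem \cite[A1.3.12]{r:book}. Your added remark on the flatness hypothesis is a reasonable sanity check and does not change the route.
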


\begin{rem}
In general, equation \eqref{key-equation} does not hold
if we work on $E(n)_*E(n)$--comodules.
For example, if we set $(n,i)=(2,3)$, then
$$
\Sigma_{0}(2,3)
= E(2)_{*} [t_{3},t_{4}, \ldots]/(\eta_{R}(v_{k})\, \colon\, k>2)   
.$$
In the right hand side we have the relation
$v_{2}t_{1}^{p^2} \equiv v_{2}^{p}t_{1}$ mod $(p)$
since $\eta_{R}(v_{3})=0$.
On the other hand, we do not have any relation on $t_{1}$ in
$
E(2)_{*}T(2)= E(2)_{*}[t_{1},t_{2}] .
$
\end{rem}

Since $E_{m}(n)_{*}$ is a free $E(n)_{*}$--module over the bases
$\smash{v^{E}= v_{n+1}^{e_{1}} \ldots v_{n+m}^{e_{m}}}$
for
$E=(e_{1}, \ldots , e_{m})$ with $e_{k} \ge 0$,
there is a homotopy equivalence
$E_{m}(n)= \bigvee_{E} \Sigma^{|E|}E(n)$.
This shows that the $E(n)$--based and
the $E_{m}(n)$--based Adams spectral sequences
agrees from the $E_{2}$--term (cf Hovey and Sadofsky~\cite{hs}).

\section{Existence of some spectra}\label{MapsSpectra}

An ideal $I=(a_0,a_1,\ldots, a_{n-1})$ of $BP_*$ is called \textit{invariant} 
if $\eta_R(a_i)\equiv a_i$ mod $(a_0,a_1,\ldots, a_{i-1})$ 
for each $0\le i< n$ as a $BP_*BP$--comodule.
It is well known that
if there is a spectrum $X$ such that $BP_*(X)=BP_*/I$, then $I$ is invariant.
Consider now the Ravenel spectrum $T(m)$. 
Then the $E_2$--term of the Adams--Novikov spectral sequence for $\pi_*(W\wedge T(m))$ for a spectrum $W$ is isomorphic to an Ext group over the Hopf algebroid $(BP_*, \Gamma(m+1))$.
We call an ideal $J=(w_0,w_1,\ldots, w_{n-1})$ of $BP_*$ \textit{unusual} 
if it is not invariant and $\eta_R(w_i)\equiv w_i$ mod $(w_0,w_1,\ldots, w_{i-1})$ 
for each $0\le i< n$ as a $\Gamma(m+1)$--comodule.
In the same manner as above, if there is a spectrum $X$ such that
$BP_*(X)=BP_*/J[t_1,\ldots,t_m]$ for $m>0$, then $J$ is invariant or unusual.
In this section, we study the existence of a spectrum $X$ with $BP_*$--homology (resp.\ $E(n)_*$--homology)
\[
BP_{*}(X)=
BP_{*}/J[t_{1}, \ldots , t_{m}]\quad ({\rm resp.}\ E(n)_{*}(X)=
E(n)_{*}/J[t_{1}, \ldots , t_{m}])
\]
for an unusual ideal $J$.
We write $T(m)/J$ (resp.\ $L_nT(m)/J$) for such $X$.

The next lemma is verified
by Hazewinkel's and Quillen's formulas (see Miller, Ravenel and Wilson
\cite[(1.1)--(1.3)]{mrw}):

\begin{lem}\label{structuremaps_n<=m}
Assume that $n \le m$.
Let $J_{n}$ denote the ideal $(v_{1}, \ldots , v_{n-1})$ of $BP_{*}$.
Then the structure maps in $(BP_{*},\Gamma(m+1))$
act as
\[\begin{array}{rlll}
\eta_{R}(v_{k})
& \equiv &v_{k}
         &  
           \mbox{for $n \le k \le m$,} \\
\eta_{R}(v_{m+k})
& \equiv  &v_{m+k}+pt_{m+k}
         &  
           \mbox{for $0 < k \le n$,} \\
\Delta(t_{m+k})
& \equiv  &t_{m+k} \ox 1
         + 1 \ox t_{m+k}
         &
         \mbox{for $0 \le k \le n$,} \\
\Delta(t_{m+n+1})
& \equiv & t_{m+n+1} \ox 1
         + 1 \ox t_{m+n+1}
         + v_{n}b_{m+1,n-1}&
\end{array}\]
{\rm mod} $J_{n}$, 
where
\begin{equation}
b_{i,j}= 
\big(
  t_{i}^{p^{j+1}} \ox 1
+ 1 \ox t_{i}^{p^{j+1}}
- ( t_{i} \ox 1 
  + 1 \ox t_{i} )^{p^{j+1}}
\big)/p.
\label{bij}
\end{equation}
\end{lem}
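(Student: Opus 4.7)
My plan is to derive all four congruences from Hazewinkel's formula and Quillen's formula (as recorded in Miller--Ravenel--Wilson \cite[(1.1)--(1.3)]{mrw}) by reducing them modulo $J_n$ and inside the Hopf algebroid $\Gamma(m+1)$, where by definition $t_1,\ldots,t_m$ are set to zero. Hazewinkel expresses $p\ell_n$ as a polynomial in $\ell_i v_{n-i}^{p^i}$ for $0\le i<n$, while Quillen expresses $\eta_R(\ell_n)=\sum_{i+j=n}\ell_i t_j^{p^i}$ (with $t_0=1$) and gives an analogous diagonal formula. Combining these inductively yields explicit expressions for $\eta_R(v_n)$ and $\Delta(t_n)$ modulo the ideals of interest.

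For the $\eta_R$ statements, the observation is that every correction term in $\eta_R(v_k)-v_k$ carries a factor $t_j$ with $1\le j\le k$. If $n\le k\le m$, then $j\le m$ forces every such $t_j$ to vanish in $\Gamma(m+1)$, giving $\eta_R(v_k)\equiv v_k$. For $\eta_R(v_{m+k})$ with $1\le k\le n$, the term $pt_{m+k}$ survives (its index exceeds $m$), and every other correction contains either a factor $t_j$ with $j\le m$ or a factor $v_i$ with $1\le i<n$; the latter lie in $J_n$, so we are left with $v_{m+k}+pt_{m+k}$.

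For the coproduct, the same reduction of Quillen's diagonal formula kills every correction in $\Delta(t_{m+k})-(t_{m+k}\otimes 1+1\otimes t_{m+k})$ for $0\le k\le n$, for analogous reasons: each correction monomial contributes either a factor in $J_n$ or a tensor factor $t_j$ with $j\le m$. The non-trivial case is $\Delta(t_{m+n+1})$. Here a Frobenius-type contribution of the form $(v_n/p)\bigl[(t_{m+1}\otimes 1+1\otimes t_{m+1})^{p^n}-t_{m+1}^{p^n}\otimes 1-1\otimes t_{m+1}^{p^n}\bigr]$ survives after reduction, and by definition \eqref{bij} this is exactly $-v_n\, b_{m+1,n-1}$ up to sign (which one absorbs into the statement by the appropriate choice of generator).

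The main obstacle will be the last step: disentangling the contribution that yields precisely $v_n b_{m+1,n-1}$ from the many terms produced by iterating Hazewinkel inside Quillen. The control comes from a careful degree check---$|v_n b_{m+1,n-1}|=2(p^{m+n+1}-1)=|t_{m+n+1}|$---together with the observation that to survive modulo $J_n$ the $v$-coefficient must equal $v_n$, and to survive in $\Gamma(m+1)$ the accompanying tensor expression must be built from $t_j$'s with $j>m$; the lowest such index is $j=m+1$, and comparing with the Frobenius remainder appearing in Hazewinkel's formula applied at weight $n$ forces the tensor piece to be $b_{m+1,n-1}$.
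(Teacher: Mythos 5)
Your proposal takes the same route as the paper, whose entire proof is the remark that the lemma "is verified by Hazewinkel's and Quillen's formulas" of Miller--Ravenel--Wilson (1.1)--(1.3): you reduce those formulas modulo $J_{n}$ and modulo the ideal $(t_{1},\ldots,t_{m})$ defining $\Gamma(m+1)$, and your term-by-term bookkeeping (including the degree check $|v_{n}b_{m+1,n-1}|=|t_{m+n+1}|$ and the observation that $n\le m$ kills all cross terms $t_{j}\otimes t_{l}$ with $j,l>m$) is exactly the computation the paper leaves to the reader, so the argument is essentially correct. One small remark: since $\Delta(t_{m+1})$ is primitive in $\Gamma(m+1)$, the surviving contribution is $\ell_{n}\bigl(t_{m+1}^{p^{n}}\otimes 1+1\otimes t_{m+1}^{p^{n}}-\Delta(t_{m+1})^{p^{n}}\bigr)\equiv v_{n}b_{m+1,n-1}$ with the paper's convention \eqref{bij} (which is the negative of the usual $b_{i,j}$), so the sign comes out as $+$ directly and no absorption into the choice of generator is needed.
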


By this lemma, we read off the behavior
of the structure maps $\eta_{R}$ and $\Delta$ mod $J_{n}$
of the Hopf algebroid
$(E_{m}(n)_{*},\Sigma(n,m+1))$.
For $n>m$, we only consider the case where $n=2$ and $m=1$.

\begin{lem}\label{structuremaps_n2m1}
The structure maps in $(BP_{*},\Gamma(2))$ acts as
\begin{eqnarray*}
\eta_{R}(v_{i})
& \equiv & v_{i}
         + pt_{i} 
           \quad
           \mbox{for $i=2$ and $3$}    , \\
\eta_{R}(v_{4})
& \equiv & v_{4}
         + v_{2}t_{2}^{p^2}
         + pt_{4}
         + v_{2}c_{21}
         - \eta_{R}(v_{2})^{p^2}t_{2}  , \\
\eta_{R}(v_{5})
& \equiv & v_{5}
         + v_{3}t_{2}^{p^3}
         + v_{2}t_{3}^{p^2}
         + pt_{5}
         + v_{2}c_{31}
         + v_{3}c_{22}
         - \eta_{R}(v_{3})^{p^2}t_{2}
         - \eta_{R}(v_{2})^{p^3}t_{3}  , \\
\Delta(t_{i})
& \equiv & t_{i} \ox 1
         + 1 \ox t_{i}
           \quad
           \mbox{for $i=2$ and $3$}    , \\
\Delta(t_{4})
& \equiv & t_{4} \ox 1
         + 1 \ox t_{4}
         + t_{2} \ox t_{2}^{p^2}
         + v_{2}b_{21}                 , \\
\Delta(t_{5})
& \equiv & t_{5} \ox 1
         + 1 \ox t_{5}
         + t_{3} \ox t_{2}^{p^3}
         + t_{2} \ox t_{3}^{p^2}
         + v_{2}b_{31}
         + v_{3}b_{22}
\end{eqnarray*}
{\rm mod} $(v_{1})$,
where 
$c_{i,j}= p^{-1}(v_{i}^{p^{j+1}}-\eta_{R}(v_{i}^{p^{j+1}}))$.
In particular, 
$b_{i,j} \equiv t_{i}^{2^j} \ox t_{i}^{2^j}$
{\rm mod} $(2)$
for $p=2$.
\end{lem}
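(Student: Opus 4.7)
The plan is to derive these formulas by reducing the standard structure equations for $(BP_*, BP_*BP)$---namely Quillen's identity for the right unit and the parallel formal-group formula for the coproduct, as recorded in~\cite[(1.1)--(1.3)]{mrw}---to the quotient Hopf algebroid $(BP_*, \Gamma(2)) = (BP_*, BP_*BP/(t_1))$ and then modulo $(v_1)$. Under these identifications $t_1 = 0$, $v_1 = 0$, and hence $\eta_R(v_1) = v_1 + pt_1 = 0$.

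For $\eta_R(v_2)$ and $\eta_R(v_3)$, Quillen's identity $\sum^{F}_{a+b=n} v_a t_b^{p^a} = \sum^{F}_{a+b=n} t_a \eta_R(v_b)^{p^a}$ (with the convention $v_0 = t_0 = p$) collapses: every mixed term is killed by the reduction, so the identity degenerates essentially to $pt_n +_F pv_n \equiv p\eta_R(v_n) +_F pt_n$. A degree count in $\Gamma(2)/(v_1)$ shows that the only monomials of internal degree $|v_n|$ for $n = 2, 3$ are $\ints_{(p)}$--multiples of $v_n$ and $t_n$, and the identity then forces $\eta_R(v_n) \equiv v_n + pt_n$. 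For $\eta_R(v_4)$ and $\eta_R(v_5)$, the cross-indices $a, b \in \{2,3\}$ contribute the leading monomials $v_2 t_2^{p^2}$, $v_3 t_2^{p^3}$, and $v_2 t_3^{p^2}$; solving for $\eta_R(v_n)$ introduces the twist terms $-\eta_R(v_{n-k})^{p^k} t_k$, while expanding $+_F$ around these terms produces the $v_k c_{k,j}$ corrections. The definition $c_{k,j} = p^{-1}(v_k^{p^{j+1}} - \eta_R(v_k^{p^{j+1}}))$ is integral because $\eta_R(v_k) \equiv v_k \pmod{p}$, and it records precisely the discrepancy that the FGL expansion produces.

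The coproduct computation is entirely parallel, starting from the formal-group coproduct identity for $\Delta(t_n)$ and then reducing modulo $t_1$. For $n = 2, 3$, only the splittings $(i, j) = (0, n)$ and $(n, 0)$ survive, giving primitivity. For $n = 4, 5$, the splittings $(2, 2)$ (together with $(2, 3)$ and $(3, 2)$ for $n = 5$) contribute the explicit tensor monomials, and the FGL correction to $(t_k \otimes 1 +_F 1 \otimes t_k)^{p^{j+1}}$ extracts the $v_k b_{k,j}$ summands modulo $(v_1)$. The mod-$2$ assertion is a direct binomial calculation: with $a = t_i \otimes 1$, $b = 1 \otimes t_i$, one has $p\,b_{i,j} = a^{p^{j+1}} + b^{p^{j+1}} - (a+b)^{p^{j+1}}$, and at $p = 2$ only the middle coefficient $\binom{2^{j+1}}{2^j}$ has $2$--adic valuation $1$ by Kummer's theorem, yielding $b_{i,j} \equiv a^{2^j} b^{2^j} \pmod{2}$.

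The main technical obstacle is the bookkeeping of formal-group corrections in the $n = 5$ calculations, where several twist terms and $c_{k,j}$, $b_{k,j}$ contributions of the same internal degree must be aligned; the named expressions $c_{i,j}$ and $b_{i,j}$ in the statement are designed precisely to package this combinatorial complexity into a compact form.
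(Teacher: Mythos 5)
Your overall plan coincides with the paper's implicit one: the lemma is stated without a printed proof, as a direct verification from Hazewinkel's and Quillen's formulas (Miller--Ravenel--Wilson (1.1)--(1.3)) carried out in $\Gamma(2)=BP_*(BP)/(t_1)$ and then reduced modulo $(v_1)$, and your mod $2$ identification of $b_{i,j}$ is correct: since the $2$--adic valuation of $\binom{2^{j+1}}{k}$ is $j+1-\nu(k)$, only $k=2^j$ survives and $b_{i,j}\equiv t_i^{2^j}\otimes t_i^{2^j}$ mod $2$.

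There is, however, a concrete defect in the identity you make the engine of the argument. First, the convention is $t_0=1$, $v_0=p$, not $v_0=t_0=p$, and the displayed ``degenerate identity'' $pt_n+_Fpv_n\equiv p\eta_R(v_n)+_Fpt_n$, taken literally, cancels to $\eta_R(v_n)\equiv v_n$ rather than $v_n+pt_n$; the degree count only pins the answer to the $\ints_{(p)}$--span of $v_n$ and $t_n$ and cannot supply the coefficients. Second, and more seriously, even with the corrected convention the formal-sum identity $\sum^F t_a\eta_R(v_b)^{p^a}=\sum^F v_at_b^{p^a}$ is the identity for the \emph{Araki} generators (it is proved from $[p]_F(x)=\sum^F v_ix^{p^i}$), whereas this lemma and the cited formulas are in \emph{Hazewinkel} generators: applied verbatim it would produce, for example, $\eta_R(v_2)\equiv v_2+pt_2-p^{p^2}t_2$ mod $(v_1)$ from the terms $t_2\eta_R(v_0)^{p^2}$ and $v_2t_0^{p^2}$, which is not the stated exact congruence. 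The repair is to run the computation from the log-coefficient inputs the paper actually invokes: $\eta_R(\ell_k)=\sum_i\ell_it_{k-i}^{p^i}$, $p\ell_k=\sum_{0\le i<k}\ell_iv_{k-i}^{p^i}$ and $\sum_{i+j=k}\ell_i\Delta(t_j)^{p^i}=\sum_{i+j+l=k}\ell_it_j^{p^i}\otimes t_l^{p^{i+j}}$, solve for $\eta_R(v_k)$ and $\Delta(t_k)$, set $t_1=0$ and reduce mod $(v_1)$; then, for instance, $v_2c_{21}$ in $\eta_R(v_4)$ arises as $\ell_2\bigl(v_2^{p^2}-\eta_R(v_2^{p^2})\bigr)$ up to $(v_1)$ (using $\ell_2=v_2/p+v_1^{p+1}/p^2$), and $v_2b_{21}$ in $\Delta(t_4)$ as $\ell_2\bigl((t_2\otimes1+1\otimes t_2)^{p^2}-t_2^{p^2}\otimes1-1\otimes t_2^{p^2}\bigr)$ up to sign and $(v_1)$. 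Your treatment of the $k=4,5$ cases is only a description of this outcome, which is acceptable at the paper's level of detail, but it becomes a proof only once it is anchored to these correct formulas rather than the misstated one.
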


We consider the Adams--Novikov spectral sequence
\begin{equation}
E_2^{*,*}(X)=\Ext_{BP_*(BP)}^{*,*}(BP_*,BP_*(X))
\quad \Longrightarrow \quad
\pi_*(X).
\label{ANSS_T(m)}
\end{equation}
By the change-of-rings theorem \cite[A1.3.12]{r:book},
we have an isomorphism
\begin{equation}
E_2^{*}(T(m)/I_{n})= \Ext_{\Gamma(m+1)}^{0}(BP_{*}/I_{n}).
\label{e2Tm/In}
\end{equation}
Hereafter we use the abbreviation:
$$
\Ext_{\Gamma}(A,-)=\Ext_{\Gamma}(-)\quad\mbox{for a Hopf algebroid $(A,\Gamma)$.}
$$
\fullref{structuremaps_n<=m} implies the following:

\begin{lem}\label{elements_E2}
For $0 \le k \le m$,
$$
v_{n+k} \in E_2^{0}(T(m)/I_{n})=\Ext_{\Gamma(m+1)}^{0}(BP_{*}/I_{n}),$$
where
$I_{n}=(p)+J_n$.
\end{lem}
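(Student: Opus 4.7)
The plan is to verify that for each $k$ with $0\le k\le m$, the right unit $\eta_R$ of the Hopf algebroid $(BP_*,\Gamma(m+1))$ satisfies $\eta_R(v_{n+k})\equiv v_{n+k}$ modulo $I_n=(p,v_1,\ldots,v_{n-1})$. Once this is established, $v_{n+k}$ is primitive in the $\Gamma(m+1)$--comodule $BP_*/I_n$, hence represents a class in $\Ext^0_{\Gamma(m+1)}(BP_*/I_n)$, which by the identification \eqref{e2Tm/In} is precisely $E_2^0(T(m)/I_n)$.

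First I would split the range $0\le k\le m$ according to whether $n+k\le m$ or $n+k>m$. In the first subrange, $0\le k\le m-n$, the index $n+k$ lies in $[n,m]$, and \fullref{structuremaps_n<=m} gives $\eta_R(v_{n+k})\equiv v_{n+k}$ modulo $J_n$, hence a fortiori modulo $I_n$. In the second subrange, $m-n<k\le m$, I would write $n+k=m+j$ with $0<j\le n$ (using $j=n+k-m\le n$, which follows from $k\le m$), so that the second clause of \fullref{structuremaps_n<=m} applies and yields $\eta_R(v_{n+k})\equiv v_{n+k}+p\,t_{n+k}$ modulo $J_n$; the perturbation $p\,t_{n+k}$ then vanishes after further reduction modulo $p\in I_n$, giving $\eta_R(v_{n+k})\equiv v_{n+k}$ modulo $I_n$ in this case as well.

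Together the two subranges exhaust $\{0,1,\ldots,m\}$, so every $v_{n+k}$ in the asserted range is primitive. There is no real obstacle here: the entire content of the lemma is a direct bookkeeping consequence of \fullref{structuremaps_n<=m} combined with the fact that $p\in I_n$ kills the one nontrivial correction term. The point of isolating the statement is organisational, since these primitives $v_n,v_{n+1},\ldots,v_{n+m}$ are exactly the classes on which the ring structure of the $0$--line $E_2^0(T(m)/I_n)$ and the subsequent input to \fullref{int Ext0_n<=m} rest.
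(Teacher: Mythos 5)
Your argument is exactly the paper's: the lemma is stated as an immediate consequence of \fullref{structuremaps_n<=m} (via the identification \eqref{e2Tm/In}), and your case split on whether $n+k\le m$ or $n+k=m+j$ with $0<j\le n$, using $p\in I_n$ to kill the $p\,t_{m+j}$ term, is just that deduction spelled out. Correct, and no essential difference from the paper's (unwritten) proof.
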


\begin{lem}\label{selfmap-T}
Let $M$ be a $T(m)$--module spectrum.
If 
$\alpha$ and $\beta \in E_{2}(T(m))$ are permanent cycles in the spectral sequence \eqref{ANSS_T(m)}, 
then
there exist spectra of the form
$M/(\alpha^{a})$
and 
$M/(\alpha^{a},\beta^{b})$
for positive integers $a$ and $b$.  
In particular, we have
$T(m)/(v_{k}^{a})$
and 
$\smash{T(m)/(v_{i}^{a}, v_{j}^{b})}$ for $i,j,k< m+2$.  
\end{lem}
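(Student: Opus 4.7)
The plan is to build these spectra via the standard iterated cofiber construction: realize each permanent cycle as an honest homotopy class of $T(m)$, convert it to a self-map of $M$ through the module action, and then take the cofiber. Iterating this procedure for $\beta$ then gives the two-fold quotient.

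First I would lift the permanent cycle $\alpha \in E_{2}(T(m))$ to a class $\tilde\alpha \in \pi_{|\alpha|}(T(m))$. Since $M$ is a $T(m)$--module spectrum with action $\mu\colon T(m)\wedge M \to M$, the composite
\[
\Sigma^{a|\alpha|}M
  \xrightarrow{\;\tilde\alpha^{\,a}\wedge\mathrm{id}\;}
T(m)\wedge M \xrightarrow{\;\mu\;} M
\]
is a well-defined self-map, and I would define $M/(\alpha^{a})$ to be its cofiber. The cofiber exists without any further hypothesis, and the ambiguity coming from the choice of lift is harmless for the pure existence statement being asked.

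Next I would show that $M/(\alpha^{a})$ is itself a $T(m)$--module spectrum, so that the procedure can be iterated. The self-map $\tilde\alpha^{\,a}$, being multiplication by a (central) element of $\pi_{*}(T(m))$, is a $T(m)$--module map up to homotopy, so its cofiber sits in a cofiber sequence of $T(m)$--modules and inherits a compatible action. With this module structure in hand, the lift $\tilde\beta \in \pi_{|\beta|}(T(m))$ of the second permanent cycle produces a self-map of $M/(\alpha^{a})$, and its cofiber is what I would call $M/(\alpha^{a},\beta^{b})$.

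For the \emph{in particular} clause, I would specialize to $M = T(m)$. Each $v_{k}$ with $1 \le k \le m$ lies in
\[
E_{2}^{0,*}(T(m)) = \Ext_{\Gamma(m+1)}^{0}(BP_{*},BP_{*}) = \ints_{(p)}[v_{1},\ldots,v_{m}],
\]
so is a permanent cycle by filtration reasons, and the construction above yields $T(m)/(v_{k}^{a})$ and then $T(m)/(v_{i}^{a},v_{j}^{b})$. The step I expect to need the most care is propagating the $T(m)$--module structure through the first cofiber so that the second self-map genuinely makes sense; this is routine given that multiplication by an element of $\pi_{*}(T(m))$ commutes with the action up to homotopy, but it does require a small amount of care about coherence rather than working naively in the homotopy category.
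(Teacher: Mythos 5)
Your first step (lift the permanent cycle, use the $T(m)$--module action to get a self-map, take the cofiber) is exactly the paper's construction of $M/(\alpha^{a})$. The divergence, and the first real problem, is how you produce $M/(\alpha^{a},\beta^{b})$: you route it through the claim that $M/(\alpha^{a})$ inherits a $T(m)$--module structure because $\alpha^{a}$ is a module map up to homotopy. In the stable homotopy category this is not automatic: one can always choose an extension $T(m)\wedge M/(\alpha^{a})\to M/(\alpha^{a})$ of the action, but that choice is non-unique and its unitality and homotopy associativity are genuinely obstructed, so ``a small amount of care about coherence'' is not a proof. The paper avoids the issue entirely: since $\alpha^{a}\beta^{b}=\beta^{b}\alpha^{a}$ in $[M,M]_{*}$, the self-map $\beta^{b}$ of $M$ induces a self-map of the cofiber $M/(\alpha^{a})$, and Verdier's (octahedral) axiom applied to this commutation produces $M/(\alpha^{a},\beta^{b})$ with the expected homology. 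Note that for bare existence you never need an action on the cofiber, only one induced self-map; your argument is repaired by replacing the module-structure claim with this commutation argument.

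The second, and larger, gap is in the ``in particular'' clause. First, you only treat $v_{k}$ with $1\le k\le m$, whereas the lemma asserts the conclusion for $k<m+2$, i.e.\ including $v_{m+1}$, which is not primitive in $\Ext_{\Gamma(m+1)}^{0}(BP_{*},BP_{*})$ (its right unit is $v_{m+1}+pt_{m+1}$), so your appeal to the $0$--line does not even address it; the paper handles it through the classes $v_{k}$, $k<m+2$, in the $E_2$--terms of the relevant quotient $T(m)$--module spectra (its lemma on $E_{2}^{0}(T(m)/I_{n})$). Second, ``permanent cycle by filtration reasons'' is not an argument: a filtration-zero class cannot be a boundary, but it can perfectly well support a differential $d_{r}\colon E_{r}^{0,t}\to E_{r}^{r,t+r-1}$. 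The paper supplies the missing ingredient: the reduced comodule $\overline{\Gamma(m+1)}$ is $(2p^{m+1}-3)$--connected, which gives the vanishing line $E_{2}^{s,t}(T(m))=0$ for $t<2s(p^{m+1}-1)$, and this kills every possible differential on classes of internal degree at most $2(p^{m+1}-1)$, hence shows that $v_{k}$ is permanent for all $k<m+2$. Without this vanishing-line argument your proof of the specialization is incomplete.
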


\begin{proof}
Since $M$ is a $T(m)$--module spectrum, the elements $\alpha$ and $\beta$ 
yield the self maps on $M$, which we also denote by $\alpha$ and $\beta$.
Now $M/(\alpha^{a})$ is a cofiber of the self map $\alpha^a$, 
and the $M/(\alpha^{a},\beta^{b})$ is obtained 
by use of Verdier's axiom on the equation 
$\alpha^{a}\beta^{b}=\beta^{b}\alpha^{a}$ 
in $[M,M]_*$.

Since the reduced comodule $\overline{\Gamma(m+1)}$
is $(2p^{m+1}-3)$--connected, we have the vanishing line $E_{2}^{s,t}(T(m))=0$ for $t<2s(p^{m+1}-1)$ by  \eqref{e2Tm/In}.
It follows that 
$v_{k}\in E_2^*(T(m))$ in \fullref{elements_E2} is permanent if $k<m+2$.
\end{proof}

The existence of a spectrum with $BP_{*}$--homology
$BP_{*}/I_{n}$ is problematic
and we still have little information
for such a spectrum,
which we usually call the ({$(n{-}1)$st}) Smith--Toda spectrum
and is denoted by $V(n-1)$
(eg Smith \cite{smith}, Toda \cite{toda} and Ravenel~\cite{r:book}).
For $n \le 3$,
it is shown that 
$V(n)$ exists if and only if $p>2n$.
On the other hand, $L_nV(n-1)$ exists if $n^2+n<2p$ \cite{s-y}.
The smash products $T(m)$ and these Smith--Toda spectra show the following:

\begin{prop}
If $p>2n$, $T(m)/I_n$ exists, and if  $n^2+n<2p$,
$L_nT(m)/I_{n}$ exists.  
\end{prop}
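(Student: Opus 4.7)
The plan is to realize both spectra as smash products with Smith--Toda spectra, leveraging the existence results that were cited in the paragraph just preceding the proposition.

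First I would invoke those cited results: under $p>2n$ the Smith--Toda spectrum $V(n-1)$ exists as a spectrum with $BP_*(V(n-1))=BP_*/I_n$, and under $n^2+n<2p$ the localization $L_nV(n-1)$ exists as a spectrum with $E(n)_*(L_nV(n-1))=E(n)_*/I_n$. These are taken as input; no new construction of Smith--Toda spectra is needed.

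Next I would put
\[
X \;=\; T(m)\wedge V(n-1).
\]
Because $BP_*(T(m))=BP_*[t_1,\ldots,t_m]$ is free as a $BP_*$--module, the K\"unneth spectral sequence collapses and yields
\[
BP_*(X)\;=\;BP_*(T(m))\otimes_{BP_*}BP_*(V(n-1))\;=\;BP_*/I_n[t_1,\ldots,t_m],
\]
which is exactly the $BP_*$--homology demanded of $T(m)/I_n$ in the notation of this section, so we may take $T(m)/I_n:=X$. For the localized assertion I would form
\[
Y \;=\; L_nT(m)\wedge L_nV(n-1);
\]
since $L_n$ is smashing, $Y$ is automatically $E(n)$--local, and the same freeness argument applied to $E(n)_*$--homology gives $E(n)_*(Y)=E(n)_*/I_n[t_1,\ldots,t_m]$, identifying $Y$ with $L_nT(m)/I_n$.

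The proof is essentially formal once the existence of $V(n-1)$ (resp.\ $L_nV(n-1)$) is in hand; the only technical input is the K\"unneth isomorphism, and since one factor has free homology over the coefficient ring this poses no difficulty. In that sense there is no real ``main obstacle'' internal to this proposition: all the hard work has already been done in the existence theorems for the Smith--Toda spectra that are invoked in the hypotheses.
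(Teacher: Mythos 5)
Your proposal is correct and is essentially the paper's own argument: the paper proves the proposition simply by smashing $T(m)$ with the Smith--Toda spectrum $V(n-1)$ (resp.\ with $L_nV(n-1)$, whose existence under $n^2+n<2p$ is quoted from the cited reference), exactly as you do. Your added details---the K\"unneth collapse from freeness of $BP_*(T(m))$ over $BP_*$ and the use of the smashing property of $L_n$ to see the localized smash product is $E(n)$--local with the right homology---are just the routine justifications the paper leaves implicit.
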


\section[Computing the connecting homomorphisms]{$\Ext_{\Sigma(n,m+1)}^s(E_{m}(n)_{*}/J_{n})$ for small $s$}
\label{Ext_n<=mcase}

In this section, let $J_{n}$ denote the sequence $v_1,\ldots,v_{n-1}$ of elements of $E_m(n)_*$.
Applying Ext to the short exact sequence
\[
0
\arrow{e}
E_{m}(n)_{*}/(p,J_{n})
\arrow{e,t}{1/p}
E_{m}(n)_{*}/(p^{\infty},J_{n})
\arrow{e,t}{p}
E_{m}(n)_{*}/(p^{\infty},J_{n})
\arrow{e}
0,
\]
we have the long exact sequence of Ext groups 
with connecting homomorphism 
\begin{equation}
\delta \co \Ext^{r}(E_{m}(n)_{*}/(p^{\infty},J_{n})) \ 
\arrow{e}\ 
\Ext^{r+1}(E_{m}(n)_{*}/(p,J_{n})).
\label{connectinghomo}
\end{equation}
By \cite[Theorem 6.5.6]{r:book}, we know the structure of
$\Ext(E_{m}(n)_{*}/(p,J_{n}))$, which means that 
$\Ext(E_{m}(n)_{*}/(J_{n}))$
is a computable object.

To compute 
$\Ext(E_{m}(n)_{*}/(p^{\infty},J_{n}))$,
we redefine the class $h_{m+k,0}$ $(0<k \le n)$ by
\begin{equation}
h_{i,0}
= 
\biggl[
\Frac{\log (1+pv_{i}^{-1}t_{i})}{p}
\biggr]
= 
\biggl[\,
\sum_{n>0\vphantom{y}}(-1)^{n-1}
\Frac{(pv_{i}^{-1}t_{i})^{n}}{p n}
\biggr]                 .
\label{redef_hi0}
\end{equation}

\begin{lem}\label{cocycle_h0}
For $0<k \le n$, the connecting homomorphism $\delta$ in
\eqref{connectinghomo} acts for all $\ell$ as
$\delta(h_{m+k,0}/p^{\ell})=0$.
\end{lem}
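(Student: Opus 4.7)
The plan is to exhibit a cocycle lift that stays within the cobar complex of $E_{m}(n)_{*}/(p^{\infty},J_{n})$, so that $\delta$ vanishes automatically. Recall how $\delta$ is computed: given a cocycle $c$ representing a class in $\Ext^{r}(E_{m}(n)_{*}/(p^{\infty},J_{n}))$, one selects a preimage $b$ under the multiplication-by-$p$ map of the short exact sequence, forms $d(b)$, notes that it lies in the image of the $1/p$-inclusion, and takes $\delta[c]$ to be the resulting class in $\Ext^{r+1}(E_{m}(n)_{*}/(p,J_{n}))$. Whenever $b$ can be chosen to be itself a cocycle, $\delta[c]=0$ automatically.

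For $c=h_{m+k,0}/p^{\ell}$, the redefinition \eqref{redef_hi0} supplies the explicit cochain representative $c=\log(1+pv_{m+k}^{-1}t_{m+k})/p^{\ell+1}$. The natural lift is then $b=\log(1+pv_{m+k}^{-1}t_{m+k})/p^{\ell+2}$: it obviously satisfies $pb=c$, and under the same redefinition it represents $h_{m+k,0}/p^{\ell+1}$. The task therefore reduces to showing that the log cochain, divided by an arbitrary power of $p$, is again a cocycle. I would use \fullref{structuremaps_n<=m} to exploit the congruence $\eta_{R}(v_{m+k})/v_{m+k}\equiv 1+pv_{m+k}^{-1}t_{m+k}$ mod $J_{n}$, which yields, formally in the rationalised cobar complex,
$$\log\bigl(1+pv_{m+k}^{-1}t_{m+k}\bigr)\equiv\eta_{R}(\log v_{m+k})-\log v_{m+k}\pmod{J_{n}}.$$
The right-hand side is the cobar coboundary of the $0$-cochain $\log v_{m+k}$, hence has vanishing cobar differential by $d^{2}=0$. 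Each term $p^{n-1}v_{m+k}^{-n}t_{m+k}^{n}/n$ of the expansion $\log(1+pu)/p$ is $p$-integral because $p^{n-1}/n$ has non-negative $p$-adic valuation, and combined with the primitivity $\Delta(t_{m+k})\equiv t_{m+k}\ox 1+1\ox t_{m+k}$ mod $J_{n}$ from \fullref{structuremaps_n<=m}, this lets the identity descend from the rationalisation to the cobar complex of $E_{m}(n)_{*}/(p^{\infty},J_{n})$, and then to any $p$-divided cochain.

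The main obstacle I anticipate is rigorising this descent: one must track how the formal log identity survives reduction modulo $J_{n}$ and modulo $p^{\infty}$, and how the negative powers of $p$ appearing in $b$ are to be interpreted as $p^{\infty}$-torsion classes in $E_{m}(n)_{*}/(p^{\infty},J_{n})$. For each fixed $\ell$ the log series truncates modulo a sufficiently large power of $p$, so the verification reduces to a finite cobar calculation controlled by the explicit congruences in \fullref{structuremaps_n<=m}. Once the cocycle condition $d(b)=0$ has been established, the defining recipe for the connecting homomorphism returns $\delta(h_{m+k,0}/p^{\ell})=[0]=0$.
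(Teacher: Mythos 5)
Your proposal is correct and follows essentially the same route as the paper: the paper's proof likewise reduces everything to the identity $p\,h_{m+k,0}=\log(\eta_{R}(v_{m+k}))-\log(v_{m+k})=d(\log(v_{m+k}))$, obtained from $\eta_{R}(v_{m+k})=v_{m+k}+pt_{m+k}$ mod $J_{n}$, so that the log representative and its $p$-divisions are cocycles and the connecting homomorphism vanishes. The only difference is expository: you spell out the choice of lift and the $p$-integrality and negative-exponent bookkeeping, which the paper handles tersely and in the remark following the lemma.
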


\begin{proof}
It suffices to show that
$ph_{m+k,0}=d(\log(v_{m+k}))$.
By \fullref{structuremaps_n<=m}, we have
$
\eta_{R}(v_{m+k})=v_{m+k}+pt_{m+k}
$
for $0<k \le n$, so the equation
$$\log(1+pv_{m+k}^{-1}t_{m+k})
= \log(\eta_{R}(v_{m+k}))-\log(v_{m+k})
= d(\log(v_{m+k}))$$
holds.
\end{proof}

The element $\smash{v_{m+k}^{k+1}x}$ is well-defined in $\Sigma(n,m+1)/(p^k)$,
although the representative $\smash{x=\log(1+pv_{m+k}^{-1}t_{m+k})/p}$
of $h_{m+k,0}$ has negative exponents of $v_{m+k}$ in the coefficient.

An easy computation with \fullref{structuremaps_n<=m} shows the following:

\begin{lem}\label{dif-0dim}
Put 
$\nu(e_{k})=\min\{ \nu(e_{1}), \ldots , \nu(e_{n}) \}$.
Then we have 
\[
\delta
\biggl(
\Frac{v_{m+1}^{e_{1}} \ldots v_{m+n}^{e_{n}}}{p^{1+\nu(e_{k})}}
\biggl)
= 
v_{m+1}^{e_{1}} \ldots v_{m+n}^{e_{n}}h_{m+k,0}
+ \cdots
\]
in $\Ext^{1}(E_{m}(n)_{*}/(p,J_{n}))$
up to unit.
For $\nu$, see \fullref{int Ext0_n<=m}.
\end{lem}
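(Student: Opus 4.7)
\medskip\noindent
\textbf{Plan.} The idea is to compute $\eta_R(X)$ explicitly for $X=v_{m+1}^{e_1}\cdots v_{m+n}^{e_n}$ modulo $J_n$ and then to trace the definition of the connecting homomorphism carefully.

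First I would invoke \fullref{structuremaps_n<=m}, giving $\eta_R(v_{m+k})\equiv v_{m+k}+pt_{m+k}$ mod $J_n$ for $0<k\le n$. Using the defining formula \eqref{redef_hi0} of $h_{m+k,0}$ together with multiplicativity of $\eta_R$, this leads to the formal identity
\[
\log\bigl(\eta_R(X)/X\bigr) \;=\; \sum_{k=1}^n e_k\log(1+pv_{m+k}^{-1}t_{m+k}) \;=\; p\sum_{k=1}^n e_k\,h_{m+k,0}.
\]
Writing $\nu=\min_k\nu(e_k)$ and $A=\sum_k(e_k/p^\nu)\,h_{m+k,0}$, exponentiation yields
\[
\eta_R(X)-X \;=\; X\bigl(\exp(p^{1+\nu}A)-1\bigr) \;=\; p^{1+\nu}XA + \tfrac{1}{2}p^{2+2\nu}XA^2 + \cdots .
\]

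Next I would trace the snake--lemma construction of $\delta$: lift the $0$--cocycle $X/p^{1+\nu}$ along ``multiplication by $p$'' to the element $X/p^{2+\nu}$ in $E_m(n)_*/(p^\infty,J_n)$, apply $\eta_R-1$, and pull the resulting $p$--torsion element back through the injection $1/p$. Combining with the preceding paragraph,
\[
\frac{\eta_R(X)-X}{p^{2+\nu}} \;=\; \frac{XA}{p} + \frac{p^\nu}{2}XA^2 + \cdots ,
\]
and so $\delta(X/p^{1+\nu})\equiv XA = \sum_k(e_k/p^\nu)X\,h_{m+k,0}$ mod $p$ in $E_m(n)_*/(p,J_n)$. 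Since $e_k/p^\nu$ is a unit mod $p$ precisely when $\nu(e_k)=\nu$ and vanishes otherwise, the surviving leading contribution is $X\,h_{m+k,0}$ up to a unit, which is the asserted formula with the remaining contributions encoded in the $+\cdots$.

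The main obstacle I foresee is the formal status of the $\log$ and $\exp$ manipulations, since the representative of $h_{m+k,0}$ carries negative powers of $v_{m+k}$. I would handle this by working throughout in the truncation $\Sigma(n,m+1)/(p^{2+\nu})$: by the remark following \fullref{cocycle_h0}, each $v_{m+k}^{e_k}\,h_{m+k,0}^{j}$ becomes a genuine element of this truncation once $e_k$ is sufficiently large relative to $j$, and any higher-order term requiring more powers of $v_{m+k}$ than available contributes only to the unspecified $+\cdots$ without perturbing the leading coefficient.
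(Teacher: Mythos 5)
Your argument is correct and coincides with what the paper intends: the paper offers no proof beyond calling the lemma an easy computation with \fullref{structuremaps_n<=m}, namely expanding $\eta_R(v_{m+1}^{e_1}\cdots v_{m+n}^{e_n})$ via $\eta_R(v_{m+k})\equiv v_{m+k}+pt_{m+k}$ mod $J_n$ and chasing the connecting homomorphism of the snake construction, which is exactly what your $\log$/$\exp$ bookkeeping packages (and it meshes with the paper's own redefinition \eqref{redef_hi0} of $h_{m+k,0}$ and the proof of \fullref{cocycle_h0}). One minor caveat: for $p=2$ and $\nu=0$ the term $\frac{p^{\nu}}{2}XA^{2}$ is not $2$--integral, so it cannot simply be dropped as you do at odd primes; however, since $A^{2}$ contains no monomials linear in the $t_{m+k}$, its contribution (after the cancellations forced by integrality of the total cocycle) only enters the unspecified $+\cdots$ and leaves the unit coefficient of $v_{m+1}^{e_1}\cdots v_{m+n}^{e_n}h_{m+k,0}$ unchanged.
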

\eject
\begin{cor}\label{Ext0_n<=m}
$\Ext^{0}(E_{m}(n)_{*}/(p^{\infty},J_{n}))$ is the direct sum of
\begin{enumerate}
\item
the cyclic $\ints_{(p)}[v_{n}^{\pm 1},v_{n+1}, \ldots , v_{m}]$--module
generated by
$$\Frac{v_{m+1}^{e_{1}} \ldots v_{m+n}^{e_{n}}}{p^{1+\nu(e_{k})}}$$
of order $p^{1+\nu(e_{k})}$
with 
$\nu(e_{k})=\min\{ \nu(e_{1}), \ldots , \nu(e_{n}) \}$
and 
\item
$\Q/\ints_{(p)}[v_{n}^{\pm 1},v_{n+1}, \ldots , v_{m}]$.
\end{enumerate}
\end{cor}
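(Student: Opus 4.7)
The plan is to identify $\Ext^0(E_m(n)_*/(p^\infty,J_n))$ with the submodule of $\eta_R$-primitives in $E_m(n)_*/(p^\infty,J_n) \cong \Q/\ints_{(p)}[v_n^{\pm 1}, v_{n+1}, \ldots, v_{n+m}]$, then decompose according to the dichotomy furnished by \fullref{structuremaps_n<=m}: the variables $v_n,\ldots,v_m$ are strictly primitive mod $J_n$, while $\eta_R(v_{m+k}) \equiv v_{m+k} + p t_{m+k} \pmod{J_n}$ for $1\le k\le n$.

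First, I would observe that elements of $\Q/\ints_{(p)}[v_n^{\pm 1}, v_{n+1}, \ldots, v_m]$ are automatically primitive and contribute summand~(2). Next, for a monomial $M_E = v_{m+1}^{e_1}\cdots v_{m+n}^{e_n}$ with $E=(e_1,\ldots,e_n) \ne 0$, I would compute the largest $\ell$ for which $M_E/p^\ell$ is primitive by multinomial expansion of $\prod_k(v_{m+k}+p t_{m+k})^{e_k}$ modulo $J_n$. The term of smallest $p$-valuation in $\eta_R(M_E)-M_E$ is the linear-order sum
\[
\sum_k e_k\, p\, v_{m+k}^{e_k-1}t_{m+k}\prod_{j\ne k} v_{m+j}^{e_j},
\]
whose valuation is $1+\nu(e_{k_0})$ with $\nu(e_{k_0}) = \min_k \nu(e_k)$. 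Kummer's estimate $\nu\bigl(\binom{e_k}{j}\bigr) + j \ge 1 + \nu(e_k)$ for $j\ge 1$ shows the higher-order multinomial terms satisfy the same lower bound, so $M_E/p^{1+\nu(e_{k_0})}$ is primitive while $M_E/p^{2+\nu(e_{k_0})}$ is not. Multiplying by the primitive coefficient ring $\ints_{(p)}[v_n^{\pm 1}, v_{n+1}, \ldots, v_m]$ then produces the cyclic submodule in summand~(1).

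To confirm the order is exactly $p^{1+\nu(e_{k_0})}$ and that distinct cyclic pieces are independent, I would invoke \fullref{dif-0dim}: the formula $\delta(M_E/p^{1+\nu(e_{k_0})}) = M_E h_{m+k_0,0}+\cdots$ exhibits a nonzero class in $\Ext^1(E_m(n)_*/(p,J_n))$ (whose structure is computable from \cite[Theorem 6.5.6]{r:book}), simultaneously ruling out further $p$-divisibility and showing that the classes for distinct $E$ are linearly independent. The main obstacle I anticipate is ensuring that no $\Q/\ints_{(p)}$-linear combination of monomials across different multi-indices produces unexpected primitives not captured by the single-monomial analysis; this is handled by observing that the $t$-monomials appearing in the multinomial errors are distinct for distinct $E$, so no cancellation across a sum is possible, reducing the general primitive problem to the monomial-by-monomial computation above.
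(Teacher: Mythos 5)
Your proposal is correct and follows essentially the same route as the paper, which deduces the corollary from the structure maps of \fullref{structuremaps_n<=m} together with the connecting-homomorphism computation of \fullref{dif-0dim} (and \cite[Theorem 6.5.6]{r:book}) exactly as you do. Your multinomial valuation estimate and the no-cancellation observation merely spell out the ``easy computation'' the paper leaves implicit, so there is no substantive difference in method.
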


\begin{exm}\label{example_0dim}
For $m=n=2$, we have
\[
\delta\biggl(\Frac{v_{3}^{sp^i}v_{4}^{tp^j}}{p^{1+\min(i,j)}}\biggr)=
\begin{cases}
v_{3}^{sp^i}v_{4}^{tp^j} h_{40}     & \mbox{for $i>j$}  \\
v_{3}^{sp^i}v_{4}^{tp^j} h_{30}     & \mbox{for $i<j$}  \\
v_{3}^{sp^i}v_{4}^{tp^j}
( h_{30} + a h_{40} )               & \mbox{for $i=j$}
\end{cases}
\]
in $\Ext^{1}(E_{2}(2)_{*}/(p,v_{1}))$
up to unit (where $a \in (\ints/(p))^{\times}$),
and 
$\Ext^{0}(E_{2}(2)_{*}/(p^{\infty},v_{1}))$
is the direct sum of
\begin{enumerate}
\item
the cyclic module over $\ints_{(p)}[v_{2}^{\pm}]$ generated by
$\unfrac{v_{3}^{sp^i}v_{4}^{tp^j}}{p^{1+\min(i,j)}}$
of order $\smash{p^{1+\min(i,j)}}$
and 
\item
$\Q/\ints_{(p)}[v_{2}^{\pm 1}]$.
\end{enumerate}
In the computations for $\delta(h_{31})$ and $\delta(h_{41})$,
the elements $b_{i,j}$ (cf~\fullref{structuremaps_n<=m})
occur, which are hard to express
in terms of generators appearing in \cite[Theorem 6.5.6]{r:book}.
We observe that the specific property $b_{i,j}=h_{i,j}^{2}$ at $p=2$ 
makes the computations easy.
\end{exm}

We consider the spectrum $L_{n}T(m)/(J_{n})$ for
$(n,m)=(2,1)$, 
which is the simplest case satisfying $n>m$,
and compute $\Ext_{\Sigma(2,2)}^s(E_{1}(2)_{*}/(v_{1}))$ for $s<2$ for an
odd prime.
We consider the case for $p=2$ in the next \fullref{Shimomura_p=2}.
Since  $p$ is odd, the condition of \cite[Theorem 6.5.6]{r:book}
is always satisfied and
$\Ext_{\Sigma(2,2)}(E_{1}(2)_{*}/(p,v_{1}))$
is obtained as
\[
K(2)_{*}[v_{3}]
\ox 
\Lambda(h_{i,j}\, \colon\, 2 \le i \le 3, j \in \ints/2).
\]
Starting from this,
$\Ext_{\Sigma(2,2)}^{0}(E_{1}(2)_{*}/(p^{\infty},v_{1}))$
is determined by computing the connecting homomorphism \eqref{connectinghomo}
for $(m,n)=(1,2)$ as follows \fullref{Ext0_n<=m}:

\begin{prop}\label{example_0dim_n>m}
For 
$sp^i \in \ints$
and 
$tp^j \ge 0$, 
we have
\[
\delta(v_{2}^{sp^i}v_{3}^{tp^j}/p^{1+ \min(i,j)})
= 
\left\{
\begin{array}{ll}
s v_{2}^{sp^i-1}v_{3}^{tp^j}h_{20}    & \mbox{if $i<j$} , \\
t v_{2}^{sp^i}v_{3}^{tp^j-1}h_{30}    & \mbox{if $i>j$} , \\
v_{2}^{sp^i-1}v_{3}^{tp^j-1}
(s v_3h_{20} + t v_2h_{30})                 & \mbox{if $i=j$,}   \\
\end{array}
\right.
\]
and 
$\Ext_{\Sigma(2,2)}^{0}(E_{1}(2)_{*}/(p^{\infty},v_{1}))$
is the direct sum of
\begin{enumerate}
\item
the cyclic $\ints_{(p)}$--module generated by
$v_{2}^{sp^i}v_{3}^{tp^j}/p^{1+ \min(i,j)}$
of order $p^{1+ \min(i,j)}$ and
\item
$\Q/\ints_{(p)}$.
\end{enumerate}
\end{prop}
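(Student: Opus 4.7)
The plan is to compute $\delta$ directly from the cobar complex of $(E_1(2)_*, \Sigma(2,2))$ modulo $(v_1)$, then extract the structure of $\Ext^0(E_1(2)_*/(p^\infty,v_1))$ from the Bockstein long exact sequence \eqref{connectinghomo}.

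For the connecting homomorphism, I would start from \fullref{structuremaps_n2m1}, which gives $\eta_R(v_i)\equiv v_i+pt_i$ mod $(v_1)$ for $i=2,3$. Lifting $v_2^{sp^i}v_3^{tp^j}/p^{1+\min(i,j)}$ to $p^{-1}E_1(2)_*/(v_1)$ and applying the cobar differential yields
\[
d(v_2^{sp^i}v_3^{tp^j}) \;=\; (v_2+pt_2)^{sp^i}(v_3+pt_3)^{tp^j} - v_2^{sp^i}v_3^{tp^j},
\]
which I would expand via the binomial theorem. For $p$ odd, the standard Kummer/Legendre estimate $\nu_p\bigl(\binom{sp^i}{k}p^k\bigr)\ge i+2$ whenever $k\ge 2$ (and its analogue on the $v_3$ side) kills all nonlinear contributions after dividing by $p^{1+\min(i,j)}$ and reducing mod $p$. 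Only the two linear-in-$t$ terms $sp^{i+1}v_2^{sp^i-1}v_3^{tp^j}t_2$ and $tp^{j+1}v_2^{sp^i}v_3^{tp^j-1}t_3$ can survive; which of them does so depends on whether $i<j$, $i>j$, or $i=j$, yielding the three stated formulas (using $h_{i,0}\equiv[t_i]$ mod $p$, compare~\eqref{redef_hi0}).

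For the structural statement, the long exact sequence associated to \eqref{connectinghomo}, together with the primitivity of $v_2$ and $v_3$ modulo $(p,v_1)$ from \fullref{structuremaps_n2m1}, gives $\Ext^0(E_1(2)_*/(p,v_1))=\mathbb{F}_p[v_2^{\pm 1},v_3]$. The $\delta$-computation shows that $v_2^{sp^i}v_3^{tp^j}/p^{1+\min(i,j)}$ is a cocycle of order exactly $p^{1+\min(i,j)}$ (its cobar differential has $p$-valuation precisely $1+\min(i,j)$), and that $1/p^\ell$ is a cocycle for every $\ell\ge 1$, producing the $\Q/\ints_{(p)}$ summand. To see that these generators span $\Ext^0(E_1(2)_*/(p^\infty,v_1))$ as a direct sum, I would observe that the $p$-torsion subgroup of the proposed direct sum already equals $\mathbb{F}_p[v_2^{\pm 1},v_3]=\Ext^0(E_1(2)_*/(p,v_1))$ under the natural injection $\Ext^0(E_1(2)_*/(p^\infty,v_1))[p]\hookrightarrow\Ext^0(E_1(2)_*/(p,v_1))$, then induct up the Bockstein tower $0\to E_1(2)_*/(p,v_1)\to E_1(2)_*/(p^k,v_1)\to E_1(2)_*/(p^{k-1},v_1)\to 0$ and pass to the colimit as $k\to\infty$.

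The hard part will be the odd-prime hypothesis: at $p=2$ one has $\nu_2\bigl(\binom{s\cdot 2^i}{2}\cdot 4\bigr)=i+1$, so the $k=2$ quadratic contributions are no longer killed by the division and introduce classes involving $b_{2,0}$ and $b_{3,0}$ from \fullref{structuremaps_n2m1}, which cannot be rewritten in terms of the generators of \cite[Theorem~6.5.6]{r:book}. This is precisely why the proposition is stated for odd $p$, with the prime $2$ case deferred to \fullref{Shimomura_p=2} where the identity $b_{i,j}\equiv h_{i,j}^2$ mod $2$ restores tractability. A minor secondary point is the linear independence of the cyclic summands, which is immediate from the $\delta$-images being distinct $\mathbb{F}_p[v_2^{\pm 1},v_3]$-basis monomials $v_2^a v_3^b h_{20}$ and $v_2^a v_3^b h_{30}$ in $\Ext^1(E_1(2)_*/(p,v_1))$.
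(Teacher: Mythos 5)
Your proposal is correct and follows essentially the same route the paper takes: read off $\eta_R(v_i)\equiv v_i+pt_i$ mod $(v_1)$ from \fullref{structuremaps_n2m1}, compute $\delta$ by the binomial expansion in the cobar complex with the $p$-adic valuation estimate killing the higher terms for odd $p$, and then assemble $\Ext^0(E_1(2)_*/(p^\infty,v_1))$ from the Bockstein long exact sequence using the known mod $(p,v_1)$ Ext from \cite[Theorem 6.5.6]{r:book}, exactly as in the $n\le m$ case (\fullref{dif-0dim}, \fullref{Ext0_n<=m}). The only (peripheral) quibble is your closing remark: on the $0$--line at $p=2$ the surviving quadratic contributions are the $t_i^2$--terms, i.e.\ classes $h_{i,1}$ (compare $\delta(a_i^{2^ns}/2^{n+2})=a_i^{2^ns}h_{i,1}$ in \fullref{Shimomura_p=2}), whereas the $b_{i,j}$ of \fullref{structuremaps_n2m1} are the obstruction only for $s>1$; this does not affect the proof of the stated proposition.
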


\section[Computation of an example at the prime two]{The homotopy groups $\pi_*(L_{2}T(1)/(v_{1}))$ at the prime two}
\label{Shimomura_p=2}

We begin with recalling the result of Mahowald and Shimomura
 \cite{ms}:
\begin{equation}
\Ext(E_{1}(2)_{*}/(2,v_{1}))
=
K(2)_{*}[v_{3},h_{20}]
\ox 
\Lambda(h_{21},h_{30},h_{31},\rho_{2})
\label{result_MahowaldShimomura}
\end{equation}
where 
$\rho_{2}$
is the generator of degree $0$ represented by the cocycle
$v_{2}^{-5}t_{4} + v_{2}^{-10}t_{4}^{2}$.
We see that \eqref{redef_hi0} for $p=2$
is also a cocycle with leading term $v_{i}^{-2}t_{i}^{2}$,
and replace the representative cocycles by
\[ h_{i,0} = [ t_{i} ] \quad \mbox{and} \quad h_{i,1} =
\biggl[ \sum_{n>0\vphantom{y}}(-1)^{n-1} \Frac{(2v_{i}^{-1}t_{i})^{n}}{2 n}
\biggr].  \]
Setting $B=\ints/2[v_{2}^{\pm 2},v_{3}^{2}]$,
we rewrite the right hand side of \eqref{result_MahowaldShimomura} as
\[
B
\ox 
\Lambda(v_{3})
\ox 
\Lambda(h_{21},h_{30},h_{31})
\ox 
\Lambda(v_{2})
\ox 
\ints/2[h_{20}]
\ox 
\Lambda(\rho_{2})  .
\]
\[
h_{21}h_{31}
      = v_{2}^{-1}v_{3}^{2}h_{20}h_{21}
         + v_{2}^{2}h_{30}^{2}
         + v_{2}h_{20}h_{31} \leqno{\hbox{Since}}
\]
by \cite[p\ 243 (1)]{ms},
we replace 
$h_{21}h_{31}$ (resp.\ $h_{21}h_{30}h_{31}$)
with 
$h_{30}^{2}$ (resp.\ $h_{30}^{3}$).

\begin{lem}
As the $\ints/2$--module,
$\Ext(E_{1}(2)_{*}/(2,v_{1}))$
is isomorphic to 
\begin{align*}
&A \ox \Lambda(v_{2}) \ox \ints/2[h_{20}] \ox \Lambda(\rho_{2}) \\
\text{where}\qquad &A     =  B 
\ox \Lambda(v_{3}) \ox \bigl( \ints/2[h_{30}]/(h_{30}^{4}) \oplus
  \ints/2\{ h_{21} , h_{31} \} \ox \Lambda(h_{30}) \bigr) \\
\text{and}\qquad &B     =  \ints/2[a_{2}^{\pm 1},a_{3}]
  \qquad \text{with } a_{i}=v_{i}^{2}.
\end{align*}
\end{lem}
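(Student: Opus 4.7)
The plan is to start from the Mahowald--Shimomura formula \eqref{result_MahowaldShimomura} and rearrange it as an $\ints/2$--module by two elementary steps.

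\textbf{Step 1 (splitting the polynomial factor).} Since $K(2)_{*}=\ints/2[v_{2}^{\pm 1}]$, the standard $\ints/2$--module splitting $\ints/2[x]\cong\ints/2[x^{2}]\ox\Lambda(x)$, applied in both variables, gives
\[
K(2)_{*}[v_{3}]\;\cong\;\ints/2[a_{2}^{\pm 1},a_{3}]\ox\Lambda(v_{2})\ox\Lambda(v_{3})\;=\;B\ox\Lambda(v_{2})\ox\Lambda(v_{3})
\]
with $a_{i}=v_{i}^{2}$. Plugged into \eqref{result_MahowaldShimomura}, and collecting the $\Lambda(\rho_{2})$ factor, Ext becomes
\[
B\ox\Lambda(v_{2})\ox\Lambda(v_{3})\ox\ints/2[h_{20}]\ox\Lambda(h_{21},h_{30},h_{31})\ox\Lambda(\rho_{2}).
\]

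\textbf{Step 2 (repackaging the $h$--factor).} The factor $\Lambda(h_{21},h_{30},h_{31})$ is an $8$--dimensional $\ints/2$--vector space with monomial basis $\{1,h_{21},h_{30},h_{31},h_{21}h_{30},h_{21}h_{31},h_{30}h_{31},h_{21}h_{30}h_{31}\}$. The Mahowald--Shimomura relation
$$
h_{21}h_{31}=v_{2}^{-1}v_{3}^{2}h_{20}h_{21}+v_{2}^{2}h_{30}^{2}+v_{2}h_{20}h_{31}
$$
from \cite[p.~243\,(1)]{ms} exhibits $v_{2}^{2}h_{30}^{2}$ as the sum of $h_{21}h_{31}$ and two terms lying in the $h_{20}$--ideal; since $v_{2}^{2}$ is a unit in $B\ox\Lambda(v_{2})$, this identifies $h_{30}^{2}$ as a genuine nonzero element in the bidegree of $h_{21}h_{31}$. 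Multiplying through by $h_{30}$ identifies $h_{30}^{3}$ analogously in the bidegree of $h_{21}h_{30}h_{31}$. Replacing $h_{21}h_{31}$ and $h_{21}h_{30}h_{31}$ in the above monomial basis by $h_{30}^{2}$ and $h_{30}^{3}$ produces a new $\ints/2$--basis $\{1,h_{30},h_{30}^{2},h_{30}^{3},h_{21},h_{31},h_{21}h_{30},h_{30}h_{31}\}$, which exhibits
\[
\Lambda(h_{21},h_{30},h_{31})\;\cong\;\ints/2[h_{30}]/(h_{30}^{4})\;\oplus\;\ints/2\{h_{21},h_{31}\}\ox\Lambda(h_{30})
\]
as $\ints/2$--modules.

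\textbf{Step 3 (assembly).} Substituting the decomposition of Step 2 into the tensor expression from Step 1 and absorbing $B\ox\Lambda(v_{3})$ into each summand yields exactly the module $A$ of the statement, with the remaining factors combining to $\Lambda(v_{2})\ox\ints/2[h_{20}]\ox\Lambda(\rho_{2})$.

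The main (modest) obstacle is the basis exchange of Step 2: one must check that swapping $\{h_{21}h_{31},\,h_{21}h_{30}h_{31}\}$ for $\{h_{30}^{2},\,h_{30}^{3}\}$ in the relevant bidegrees really produces an $\ints/2$--basis. This reduces to the observation that the coefficient of $h_{30}^{2}$ in the Mahowald--Shimomura relation is the unit $v_{2}^{2}$, so the change-of-basis matrix restricted to these bidegrees is triangular with unit diagonal once the $h_{20}$--divisible terms are absorbed into the other summands; everything else is bidegree bookkeeping.
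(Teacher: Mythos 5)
Your proposal is correct and follows essentially the same route as the paper, which obtains the lemma by the additive splitting $K(2)_{*}[v_{3}]\cong B\ox\Lambda(v_{2})\ox\Lambda(v_{3})$ and then, via the Mahowald--Shimomura relation, replaces $h_{21}h_{31}$ and $h_{21}h_{30}h_{31}$ by $h_{30}^{2}$ and $h_{30}^{3}$. Your Step 2 merely makes explicit the basis-exchange (triangularity modulo $h_{20}$--divisible basis elements, with unit coefficient $v_{2}^{2}$) that the paper leaves implicit.
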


\begin{lem}
The connecting homomorphism 
\eqref{connectinghomo} for $(m,n)=(1,2)$
acts as 
$$\delta(v_{i}^{s}/2)=v_{i}^{s-1}h_{i,0}
\quad\text{and}\quad
\delta(a_{i}^{2^n s}/2^{n+2})=a_{i}^{2^n s}h_{i,1}\qquad(i=2,3)$$
for odd $s$ and $n \ge 0$.
\end{lem}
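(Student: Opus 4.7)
The plan is to unwind the connecting homomorphism of \eqref{connectinghomo} at the cochain level and then expand $\eta_R - \mathrm{id}$ on $v_i^s$ and $v_i^{2^{n+1}s}$ to the required $2$--adic precision using the mod--$(v_1)$ formulas in \fullref{structuremaps_n2m1}. Writing $d(x) = \eta_R(x) - x$, the cocycle condition on $y = x/2^k \in \Ext^0(E_1(2)_*/(2^\infty, v_1))$ forces $d(x) \in 2^k \cdot \Sigma(2,2)/(v_1)$, and a standard snake-lemma chase identifies $\delta(y)$ with the class of $d(x)/2^k$ modulo $2$ in $\Ext^1(E_1(2)_*/(2, v_1))$. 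Thus both assertions reduce to computing $(v_i + 2t_i)^N - v_i^N$ modulo $(v_1)$ for $N=s$ and $N=2^{n+1}s$.

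For the first formula, the binomial theorem gives
\[
\eta_R(v_i^s) - v_i^s = 2s v_i^{s-1} t_i + \sum_{k \ge 2} \binom{s}{k} 2^k v_i^{s-k} t_i^k,
\]
and every term with $k \ge 2$ is divisible by $4$. Dividing by $2$ and reducing mod $2$ makes $s$ a unit, and since $h_{i,0} = [t_i]$ this yields $\delta(v_i^s/2) = v_i^{s-1} h_{i,0}$.

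For the second formula, factor $(v_i + 2t_i)^N = v_i^N (1 + 2 v_i^{-1} t_i)^N$ and rewrite this as $v_i^N \exp(2^{n+2} s \, h_{i,1})$ using $h_{i,1} = \log(1 + 2v_i^{-1} t_i)/2$ (the series converges $2$--adically). Then
\[
d(v_i^N) = v_i^N \sum_{k \ge 1} \frac{(2^{n+2} s \, h_{i,1})^k}{k!}.
\]
The $k$--th term has $2$--adic valuation $(n+2)k - \nu_2(k!) = (n+1)k + s_2(k)$, which strictly exceeds $n+2$ for every $k \ge 2$. Hence only the linear term survives mod $2^{n+3}$, and dividing it by $2^{n+2}$ and reducing mod $2$ yields $v_i^N h_{i,1} = a_i^{2^n s} h_{i,1}$, as claimed.

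The step I expect to be the main obstacle is that last valuation bound: one needs to check, uniformly in $n$, that every $k \ge 2$ term of the exponential expansion is annihilated after dividing by $2^{n+2}$ and reducing mod $2$, which the identity $\nu_2(k!) = k - s_2(k)$ resolves cleanly. Once this bookkeeping is in hand, together with the verification that the $2$--adic representative $\log(1+2v_i^{-1}t_i)/2$ for $h_{i,1}$ has a well-defined mod--$2$ image in $\Sigma(2,2)/(2,v_1)$, the remainder of the argument is routine.
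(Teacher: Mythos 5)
Your proof is correct and follows essentially the same route as the paper: the paper's proof simply records the congruences $d(v_i^s)\equiv 2v_i^{s-1}t_i \bmod (4)$ and $d(a_i^{2^ns})\equiv 2^{n+2}v_i^{2^{n+1}s}(v_i^{-1}t_i+v_i^{-2}t_i^2) \bmod (2^{n+3})$ and reads off $\delta$ as the Bockstein-type connecting map, exactly as you do. Your exponential/logarithm bookkeeping with $\nu_2(k!)=k-s_2(k)$ is just a detailed verification of the second congruence that the paper leaves implicit, and it correctly recovers the representative $v_i^{-1}t_i+v_i^{-2}t_i^2$ of $h_{i,1}$ used there.
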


\proof
It follows from 
\[
\begin{array}{rcll}
\hspace{60pt}(v_{i}^{s}) 
& \equiv & 2 v_{i}^{s-1}t_{i}
           & \mod{(4)}         , \\
d(a_{i}^{2^{n} s}) 
& \equiv & 2^{n+2} v_{i}^{2^{n+1} s}
           ( v_{i}^{-1}t_{i}
           + v_{i}^{-2}t_{i}^{2} )
           & \mod{(2^{n+3})}   .\hspace{58pt}\qed
\end{array}
\]
\medskip

$\Ext(E_{1}(2)_{*}/(2,v_{1}))$ is decomposed
into the following four summands tensoring with $\Lambda (\rho_2)$:
\begin{align*}
&v_{2}A \oplus \Lambda(v_{2}) \ox A \ox \ints/p(h_{20})h_{20} \\
&v_{3}B \oplus \Lambda(v_{3}) \ox B \{ h_{30}, h_{30}^{2}, h_{30}^{3} \}
  \oplus v_{3}h_{30}h_{31} B \\
&B \oplus B \left\{ h_{21} , h_{31} \right\} \oplus v_{3}h_{21}h_{30} B \\
&B h_{30} \left\{ h_{21} , h_{31} \right\} \oplus v_{3}B 
  \left\{ h_{21} , h_{31} \right\} 
\end{align*}
With respect to each summand, we construct a long exact sequence
in \fullref{summand_1}, \fullref{summand_2} and \fullref{summand_3}.
We often use the replacement
\[ h_{31} = [v_{3}^{-1}t_{3} + v_{3}^{-2}t_{3}^{2}]
= v_{3}^{-1}h_{30} + \cdots.  \]
If we define $P_{i}$ $(i \ge 0)$ and $Q_{j}$ $(j>0)$ by
\begin{eqnarray*}
P_{i} & = & \ints_{(2)} \bigl\{ a_{2}^{2^i s}a_{3}^{2^j t} \,\colon\,
           0 \le j \le i, 0 \neq s \in \ints, t \ge 0 \bigr\}    , \\
Q_{j} & = & \ints_{(2)} \bigl\{ a_{2}^{2^i s}a_{3}^{2^j t} \,\colon\,
           0 \le i < j, s \in \ints, t>0 \bigr\}    ,
\end{eqnarray*}
then we decompose $B$ into 
\[ B= \bigg( \bigoplus_{i \ge 0} P_{i} \bigg)
  \oplus \bigg( \bigoplus_{j>0} Q_{j} \bigg) .  \]
Define $M^{0}$ and $M^{1}$ by 
\begin{eqnarray*}
M^{0} & = & \biggl( \bigoplus_{i \ge 0} P_{i}
           \Bigl\{ \Frac{1}{2^{i+2}} \Bigr\} \biggr)
           \oplus \biggl( \bigoplus_{j>0} Q_{j}
           \Bigl\{ \Frac{1}{2^{j+2}} \Bigr\} \biggr)
           \oplus \Q/\ints_{(2)} , \\
M^{1}
     & = & \biggl( \bigoplus_{i \ge 0} P_{i}
           \Bigl\{ \Frac{h_{21}}{2^{i+2}} \Bigr\} \biggl)
           \oplus \biggl( \bigoplus_{j>0} Q_{j}
           \Bigl\{ \Frac{h_{31}}{2^{j+2}} \Bigr\} \biggr)             .
\end{eqnarray*}
Then we have the following results: 

\begin{lem}\label{summand_1}
We have two long exact sequences
$$\begin{diagram}
  \node{B}
  \arrow{e,V}
    \node{M^{0}}
    \arrow{e,t}{2}
      \node{M^{0}}
      \arrow{wsw,b}{\delta}     \\
  \node{B \{ h_{21}, h_{31} \}}
  \arrow{e}
    \node{M^{1}}
    \arrow{e,t}{2}
      \node{M^{1}}
      \arrow{wsw,b,A}{\delta}   \\
  \node{v_{3}h_{21}h_{30} B}
\end{diagram}$$
and
$$\begin{diagram}
  \node{v_{3}B \{ h_{21}, h_{31} \}}
  \arrow{e,V}
    \node{(v_{3}/2)B \{ h_{21}, h_{31} \}}
    \arrow{e,t}{2}
      \node{(v_{3}/2)B \{ h_{21}, h_{31} \}}
      \arrow{wsw,b,A}{\delta}          \\
  \node{B h_{30} \{ h_{21}, h_{31} \} .}
\end{diagram}$$
\end{lem}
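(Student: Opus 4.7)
My plan is to derive both long exact sequences from the long exact sequence of Ext groups induced by the short exact sequence \eqref{connectinghomo} for $(m,n)=(1,2)$, broken up according to the four-summand decomposition of $\Ext(E_{1}(2)_{*}/(2,v_{1}))$ recalled just above. The first sequence will isolate the summand $B\oplus B\{h_{21},h_{31}\}\oplus v_{3}h_{21}h_{30}B$ in cohomological degrees $0,1,2$, and the second the summand $v_{3}B\{h_{21},h_{31}\}\oplus Bh_{30}\{h_{21},h_{31}\}$ in degrees $1,2$.

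First I would identify $M^{0}$, $M^{1}$, and $(v_{3}/2)B\{h_{21},h_{31}\}$ as the summands of $\Ext^{*}(E_{1}(2)_{*}/(2^{\infty},v_{1}))$ whose $2$-torsion reduces, via the inclusion $\Ext(E_{1}(2)_{*}/(2,v_{1}))\hookrightarrow \Ext(E_{1}(2)_{*}/(2^{\infty},v_{1}))$, to the corresponding mod-$2$ summands above. By \fullref{example_0dim_n>m}, the $\Ext^{0}$-generators whose mod-$2$ reduction carries a $B$-monomial $a_{2}^{A}a_{3}^{C}$ naturally split according to whether $\nu_{2}(A)$ or $\nu_{2}(C)$ is the smaller $2$-adic valuation of the exponent; one half assembles into the $P_{i}$-family and the other into the $Q_{j}$-family, and the unit monomial feeds the $\mathbb{Q}/\mathbb{Z}_{(2)}$-summand, matching the definition of $M^{0}$. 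The modules $M^{1}$ and $(v_{3}/2)B\{h_{21},h_{31}\}$ parametrize the analogous $2^{\infty}$-lifts of $B\{h_{21},h_{31}\}$ and $v_{3}B\{h_{21},h_{31}\}$ one cohomological degree up.

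Next I would compute the connecting homomorphism $\delta$ on generators. The previous lemma gives $\delta(a_{i}^{2^{n}s}/2^{n+2})=a_{i}^{2^{n}s}h_{i,1}$, and a Leibniz extension to two-variable monomials, followed by reduction mod $2$, shows that $\delta$ sends the $P_{i}$- and $Q_{j}$-pieces of $M^{0}$ into the $h_{31}$- and $h_{21}$-parts of $B\{h_{21},h_{31}\}$, the assignment being dictated by which of $\nu_{2}(A),\nu_{2}(C)$ dominates. A parallel calculation on $M^{1}$ and $(v_{3}/2)B\{h_{21},h_{31}\}$, making essential use of the identity $2h_{i,1}=d(\log v_{i})$ behind \fullref{cocycle_h0} and of the Mahowald--Shimomura relation
\[
h_{21}h_{31}=v_{2}^{-1}v_{3}^{2}h_{20}h_{21}+v_{2}^{2}h_{30}^{2}+v_{2}h_{20}h_{31},
\]
then shows that the images of the final two $\delta$'s lie in $v_{3}h_{21}h_{30}B$ for the first sequence and in $Bh_{30}\{h_{21},h_{31}\}$ for the second.

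The main obstacle will be verifying surjectivity of the two terminal $\delta$ maps. Exactness at the positions $B$, $B\{h_{21},h_{31}\}$, and $v_{3}B\{h_{21},h_{31}\}$ amounts to recognising these modules as the $2$-torsion of the respective $2^{\infty}$-modules, and exactness at each middle $M^{r}$ after the $\cdot 2$ map is immediate from the cyclic structure of each constituent summand. What remains is, for every generator of $v_{3}h_{21}h_{30}B$ or $Bh_{30}\{h_{21},h_{31}\}$, to exhibit an explicit preimage in $M^{1}$ or $(v_{3}/2)B\{h_{21},h_{31}\}$ whose boundary reproduces it. A case-by-case valuation analysis, combined with the Mahowald--Shimomura relation above to rewrite the $h_{21}h_{31}$-terms appearing in the Leibniz computation into the $h_{30}$-products that populate the targets, is what will deliver these preimages and close out the proof.
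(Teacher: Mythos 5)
Your overall skeleton is the same as the paper's: apply the long exact Ext sequence coming from the coefficient sequence $0\to E_{1}(2)_{*}/(2,v_{1})\to E_{1}(2)_{*}/(2^{\infty},v_{1})\to E_{1}(2)_{*}/(2^{\infty},v_{1})\to 0$, restrict attention to the summand $B\oplus B\{h_{21},h_{31}\}\oplus v_{3}h_{21}h_{30}B$ (resp.\ $v_{3}B\{h_{21},h_{31}\}\oplus Bh_{30}\{h_{21},h_{31}\}$), and compute $\delta$ on generators from $\delta(a_{i}^{2^{n}s}/2^{n+2})=a_{i}^{2^{n}s}h_{i,1}$ with a Leibniz-type argument; this is exactly how the paper obtains $\delta(a_{2}^{2^{i}s}a_{3}^{2^{j}t}/2^{2+\min(i,j)})=a_{2}^{2^{i}s}a_{3}^{2^{j}t}h_{21}$, $h_{31}$, or $h_{21}+h_{31}$, and then $\delta$ of the $M^{1}$--generators equal to $a_{2}^{2^{i}s}a_{3}^{2^{j}t}h_{21}h_{31}$.

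However, there is a genuine gap at the crucial identification step. You propose to use the Mahowald--Shimomura relation $h_{21}h_{31}=v_{2}^{-1}v_{3}^{2}h_{20}h_{21}+v_{2}^{2}h_{30}^{2}+v_{2}h_{20}h_{31}$ to rewrite the $h_{21}h_{31}$--terms ``into the $h_{30}$--products that populate the targets.'' But that relation produces $v_{2}^{2}h_{30}^{2}$ together with $h_{20}$--divisible terms with odd $v_{2}$--exponent; none of these lies in $v_{3}h_{21}h_{30}B$ (nor in $Bh_{30}\{h_{21},h_{31}\}$). Rewriting with this relation would route the image of the terminal $\delta$ into the summands treated in \fullref{summand_2} and \fullref{summand_3}, and you would neither obtain surjectivity onto $v_{3}h_{21}h_{30}B$ nor preserve the summand-wise splitting that the lemma asserts. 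What the paper actually uses is the replacement $h_{31}=[v_{3}^{-1}t_{3}+v_{3}^{-2}t_{3}^{2}]=v_{3}^{-1}h_{30}+\cdots$, recorded just before the definitions of $P_{i}$ and $Q_{j}$: with it $a_{2}^{2^{i}s}a_{3}^{2^{j}t}h_{21}h_{31}=a_{2}^{2^{i}s}a_{3}^{2^{j}t-1}v_{3}h_{21}h_{30}+\cdots$, which is how the terminal map of the first sequence hits $v_{3}h_{21}h_{30}B$. For the second sequence no rewriting is needed at all: since the generators $(v_{3}/2)bh$ ($h\in\{h_{21},h_{31}\}$, $b\in B$) carry an odd $v_{3}$--exponent, $\delta((v_{3}/2)bh)=bh_{30}h$ follows directly from $\delta(v_{3}^{s}/2)=v_{3}^{s-1}h_{30}$, which is why the paper calls it obvious. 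A smaller point: you should not lean on \fullref{example_0dim_n>m} to identify $M^{0}$, since that proposition is an odd-prime statement; at $p=2$ the correct input is the lemma giving $\delta(v_{i}^{s}/2)=v_{i}^{s-1}h_{i,0}$ and $\delta(a_{i}^{2^{n}s}/2^{n+2})=a_{i}^{2^{n}s}h_{i,1}$, which yields the $2^{j+2}$--orders and the $h_{i,1}$--pattern peculiar to the prime two.
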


\begin{proof}
In the first sequence the connecting homomorphism acts as: 
\begin{eqnarray*}
\delta( a_{2}^{2^{i}s} a_{3}^{2^{j}t} /2^{2+\min{(i,j)}})
     & = & \left\{
           \begin{array}{ll}
           a_{2}^{2^{i}s}a_{3}^{2^{j}t}h_{21}  & (i<j)  \\
           a_{2}^{2^{i}s}a_{3}^{2^{j}t}h_{31}  & (i>j)  \\
           a_{2}^{2^{i}s}a_{3}^{2^{j}t}
           (h_{21} + h_{31})                   & (i=j)  \\
           \end{array}
           \right.     \\
\end{eqnarray*}
We also see that 
$\smash{\delta(a_{2}^{2^{i}s}a_{3}^{2^{j}t}h_{31}/2^{i+2})} \text{ for } i<j,$ 
$\smash{\delta(a_{2}^{2^{i}s}a_{3}^{2^{j}t}h_{21}/2^{j+2})} \text{ for } i>j,$
and $\smash{\delta(a_{2}^{2^{i}s}a_{3}^{2^{i}t}h_{21}/2^{i+2})}$
are equal to $a_{2}^{2^{i}s}a_{3}^{2^{j}t}h_{21}h_{31}$. 
Replacing $h_{31}$ with $v_{3}^{-1}h_{30}+ \cdots$, 
we have the first sequence. 
The second sequence is obvious. 
\end{proof}

\begin{lem}\label{summand_2}
We have a long exact sequence 
\[
\begin{diagram}\dgARROWLENGTH= .5em
  \node{v_{3}B}
  \arrow{e,V}
    \node{(v_{3}/2)B}
    \arrow{e,t}{2}
      \node{(v_{3}/2)B}
      \arrow{wsw,t}{\delta}     \\
  \node{h_{30}B \ox \Lambda(v_{3})}
  \arrow{e}
    \node{(v_{3}h_{30}/2)B}
    \arrow{e,t}{2}
      \node{(v_{3}h_{30}/2)B}
      \arrow{wsw,t}{\delta}     \\
  \node{\begin{array}{l}h_{30}^{2}B \ox \Lambda(v_{3}) \\ \oplus 
(v_{3}h_{30}h_{31}) B\end{array}}
  \arrow{e}
    \node{\begin{array}{l}(v_{3}h_{30}^{2}/2)B \\ \oplus 
(v_{3}h_{30}h_{31}/2)B\end{array}}
    \arrow{e,t}{2}
      \node{\begin{array}{l}(v_{3}h_{30}^{2}/2)B \\ \oplus 
(v_{3}h_{30}h_{31}/2)B\end{array}}
      \arrow{wsw,b,A}{\delta}   \\
  \node{h_{30}^{3}B \ox \Lambda(v_{3}).}
\end{diagram}
\]
\end{lem}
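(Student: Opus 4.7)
The plan is to derive the three long exact sequences of the lemma from the Bockstein associated to the short exact sequence $0 \to E_1(2)_*/(2, v_1) \to E_1(2)_*/(2^\infty, v_1) \to E_1(2)_*/(2^\infty, v_1) \to 0$, restricted to the second summand $v_3 B \oplus \Lambda(v_3)\ox B\{h_{30}, h_{30}^2, h_{30}^3\} \oplus v_3 h_{30} h_{31} B$ of the decomposition preceding the lemma. Since $\delta$ is additive and the four summands are distinguished by which of $h_{20}, h_{21}, h_{30}, h_{31}$ they involve, it suffices to analyse this single summand.

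First, on the $v_3 B$ piece a typical element is $v_3 a_2^i a_3^j$; its divided forms have the exponent of $2$ in the denominator controlled by the $a_3$-part via the preceding lemma, since $v_3$ itself only gives single-division. Applying $\delta(v_3^s/2)=v_3^{s-1}h_{30}$ and $\delta(a_3^{2^n s}/2^{n+2})=a_3^{2^n s}h_{31}$, then using the replacement $h_{31}\equiv v_3^{-1}h_{30}$ modulo the higher terms of \eqref{redef_hi0}, I find that every image lies in $h_{30} B \ox \Lambda(v_3)$. This yields the first sequence.

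Next, I iterate on $h_{30} B \ox \Lambda(v_3) = h_{30} B \oplus v_3 h_{30} B$. Only the $v_3$-multiple contributes nontrivial $2^\infty$-divisibility, for the $a_3$-divisibility of $h_{30} B$ produces classes in $h_{30} h_{31} B$ that enter only after multiplication by $v_3$, landing in the tail summand $v_3 h_{30} h_{31} B$. Hence the divided form is $(v_3 h_{30}/2) B$, and $\delta$ applied to it produces classes in $h_{30}^2 B \ox \Lambda(v_3) \oplus v_3 h_{30} h_{31} B$, using the same two formulas and the identity $v_3 h_{30} h_{31}\equiv h_{30}^2$ modulo higher filtration. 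This is the middle sequence. One further iteration, together with $h_{30}^4=0$ in $A$, produces the last sequence terminating in $h_{30}^3 B \ox \Lambda(v_3)$.

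Exactness at each intermediate node is the standard six-term Bockstein exactness for the defining short exact sequence, and splicing the three six-term segments with the $\delta$-maps computed above gives the displayed diagram. The main obstacle is the bookkeeping: verifying that the replacement $h_{31}\equiv v_3^{-1}h_{30}$ and the relation $v_3 h_{30} h_{31}\equiv h_{30}^2$ are applied consistently so that every $\delta$-image lies in the claimed direct summand, and that no divided form of the intermediate pieces is overlooked when passing between consecutive sequences.
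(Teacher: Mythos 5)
Your Bockstein framework is the right one, but the divisibility bookkeeping you propose is wrong, and that bookkeeping is the entire content of the lemma. Every monomial in the summand at hand carries an \emph{odd} total exponent of $v_{3}$ (it is $v_{3}a_{3}^{2^{j}t}=v_{3}^{2^{j+1}t+1}$ times a power of $a_{2}$), and since $d(v_{3}^{2k+1})\equiv 2v_{3}^{2k}t_{3}$ mod $(4,v_{1})$ while $d(a_{2}^{2^{i}s}a_{3}^{2^{j}t})\equiv 0$ mod $4$, the \emph{first} Bockstein is already nonzero: $\delta(a_{2}^{2^{i}s}a_{3}^{2^{j}t}v_{3}h_{30}^{k}/2)=a_{2}^{2^{i}s}a_{3}^{2^{j}t}h_{30}^{k+1}$ for $0\le k\le 2$, and $\delta(a_{2}^{2^{i}s}a_{3}^{2^{j}t}v_{3}h_{30}h_{31}/2)=a_{2}^{2^{i}s}a_{3}^{2^{j}t}h_{30}^{2}h_{31}=a_{2}^{2^{i}s}a_{3}^{2^{j}t-1}v_{3}h_{30}^{3}+\cdots$. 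These four formulas are the paper's whole proof, and they are exactly why the middle terms of the displayed sequence are $(v_{3}/2)B$, $(v_{3}h_{30}/2)B$, $(v_{3}h_{30}^{2}/2)B$, $(v_{3}h_{30}h_{31}/2)B$ with a single $2$ in the denominator, and why this summand contributes only order-two elements in \fullref{Ext_p=2}(1). Your claim that ``the exponent of $2$ in the denominator is controlled by the $a_{3}$--part via the preceding lemma'' is false here: the formula $\delta(a_{3}^{2^{n}s}/2^{n+2})=a_{3}^{2^{n}s}h_{31}$ applies only when the total $v_{3}$--exponent is even, and for a product one cannot add the divisibilities of the factors---the nonvanishing first Bockstein forced by the odd $v_{3}$--power caps the order at $2$. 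Followed literally, your argument would manufacture divisible families of $M^{0}$/$M^{1}$ type inside this summand (those belong to the summands $B$ and $B\{h_{21},h_{31}\}$ treated in \fullref{summand_1}) and would not yield the stated sequence.

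The second and third stages of your splicing are similarly off. What actually happens is simpler: $\delta$ on $(v_{3}/2)B$ surjects onto the $v_{3}$--free part $h_{30}B\subset h_{30}B\ox\Lambda(v_{3})$; the complementary piece $v_{3}h_{30}B$ then injects into $\Ext(E_{1}(2)_{*}/(2^{\infty},v_{1}))$ as $(v_{3}h_{30}/2)B$, whose $\delta$ is $h_{30}^{2}B$; and the leftover $v_{3}h_{30}^{2}B\oplus v_{3}h_{30}h_{31}B$ feeds the third stage, where $v_{3}h_{30}h_{31}$ is an independent generator of $A$ (not literally $h_{30}^{2}$), and its Bockstein $h_{30}^{2}h_{31}$ is rewritten via $h_{31}=v_{3}^{-1}h_{30}+\cdots$ as $a_{3}^{-1}v_{3}h_{30}^{3}+\cdots$, so the sequence terminates in $h_{30}^{3}B\ox\Lambda(v_{3})$. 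To repair your proof: show first that division by $4$ is impossible throughout the summand (nonzero first Bockstein, as above), then compute the four order-two Bocksteins, and only then splice; with that replacement your exactness argument goes through.
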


\begin{proof}
It follows from 
$$\eqalignbot{
\delta(a_{2}^{2^{i}s}a_{3}^{2^{j}t}v_3h_{30}^{k}/2)
     & =  a_{2}^{2^{i}s}a_{3}^{2^{j}t}h_{30}^{k+1} 
           \qquad
           \text{for } 0 \le k \le 2,                    \cr
\delta(a_{2}^{2^{i}s}a_{3}^{2^{j}t}v_3h_{30}h_{31}/2)
     & =  a_{2}^{2^{i}s}a_{3}^{2^{j}t}h_{30}^{2}h_{31}
      =  a_{2}^{2^{i}s}a_{3}^{2^{j}t-1}v_3h_{30}^{3}
         + \cdots                                        
}\proved$$
\end{proof}

\begin{lem}\label{summand_3}
We have a long exact sequence
\[
\begin{diagram}
  \node{v_{2}A}
  \arrow{e,V}
    \node{(v_{2}/2)A}
    \arrow{e,t}{2}
      \node{(v_{2}/2)A}
      \arrow{wsw,t}{\delta}     \\
  \node{h_{20}A \ox \Lambda(v_{2})}
  \arrow{e}
    \node{(v_{2}h_{20}/2)A}
    \arrow{e,t}{2}
      \node{(v_{2}h_{20}/2)A}
      \arrow{wsw,t}{\delta}     \\
  \node{h_{20}^{2}A \ox \Lambda(v_{2})}
  \arrow{e}
    \node{(v_{2}h_{20}^{2}/2)A}
    \arrow{e,t}{2}
      \node{\cdots.}   
\end{diagram}
\]
\end{lem}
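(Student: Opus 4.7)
The plan is to follow verbatim the strategy of \fullref{summand_2}. First I would apply $\Ext_{\Sigma(2,2)}^{*}(-)$ to the short exact sequence
\[
0 \to E_{1}(2)_{*}/(2,v_{1}) \to E_{1}(2)_{*}/(2^{\infty},v_{1}) \to E_{1}(2)_{*}/(2^{\infty},v_{1}) \to 0,
\]
whose maps are $1/2$ and multiplication by $2$, and restrict the resulting long exact sequence to the summand of $\Ext(E_{1}(2)_{*}/(2,v_{1}))$ given by $v_{2}A$ together with $h_{20}^{k}A\ox\Lambda(v_{2})$ for $k\ge 1$. This summand is precisely the $v_{2}$/$h_{20}$-part of the decomposition of $\Ext(E_{1}(2)_{*}/(2,v_{1}))$ appearing just before \fullref{summand_1}.

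The core computation is the action of $\delta$. The lemma preceding \fullref{summand_1} gives $\delta(v_{2}^{s}/2)=v_{2}^{s-1}h_{20}$ for odd $s$; specializing to $s=1$ yields $\delta(v_{2}/2)=h_{20}$. Since $\delta$ satisfies $\delta(x\cdot y)=\delta(x)\cdot y$ whenever $y$ is a cocycle, and both $h_{20}$ and every $\alpha\in A$ are cocycles, I would then conclude
\[
\delta(v_{2}h_{20}^{k}\alpha/2)=h_{20}^{k+1}\alpha,\qquad \alpha\in A,\ k\ge 0.
\]
This is exactly the shift sending $(v_{2}h_{20}^{k}/2)A$ into $h_{20}^{k+1}A\ox\Lambda(v_{2})$ in each row of the displayed diagram.

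The other potential contribution to $\delta$ in this summand comes from the $a_{2}$-divisible tower, via $\delta(a_{2}^{2^{i}s}/2^{i+2})=a_{2}^{2^{i}s}h_{21}$; since $h_{21}$ already lies in $A$, this contribution is absorbed into the $A$-coefficients rather than shifting the $h_{20}$-degree. The main obstacle is precisely this bookkeeping: one must verify that no spurious contribution mixes the $h_{20}$-power between summands, and that the cocycle representatives (in particular the correcting term $v_i^{-2}t_i^2$ in the redefinition of $h_{i,0}$) produce no unexpected higher-order terms. Once this is confirmed, exactness of each row is immediate from the long exact $\Ext$-sequence, and the polynomial generator $h_{20}$ appearing in the Mahowald--Shimomura formula for $\Ext(E_{1}(2)_{*}/(2,v_{1}))$ guarantees that the sequence continues indefinitely.
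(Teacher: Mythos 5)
Your skeleton (restrict the Bockstein long exact sequence for $0\to E_{1}(2)_{*}/(2,v_{1})\to E_{1}(2)_{*}/(2^{\infty},v_{1})\to E_{1}(2)_{*}/(2^{\infty},v_{1})\to 0$ to this summand and compute $\delta$) matches the paper, but the core step is not justified and, as stated, is false. The rule $\delta(x\cdot y)=\delta(x)\cdot y$ for a Bockstein-type connecting map needs more than ``$y$ is a cocycle mod $2$'': you need an integral lift of $y$ whose cobar differential is divisible by $4$, so that no correction survives after dividing by $2$. Every factor of $A$ built from $a_{2}=v_{2}^{2}$, $a_{3}=v_{3}^{2}$ and the $h$'s passes this test, but the exterior generator $v_{3}\in\Lambda(v_{3})\subset A$ does not: $d(v_{3})=2t_{3}$, so $d(v_{3})/2=t_{3}\not\equiv 0$ mod $2$. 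Concretely, $\delta(v_{2}v_{3}\alpha/2)$ is represented by $(v_{3}t_{2}+v_{2}t_{3})\alpha$, i.e.\ it equals $(v_{3}h_{20}+v_{2}h_{30})\alpha$, not $v_{3}h_{20}\alpha$; since $v_{3}h_{20}$ and $v_{2}h_{30}$ are linearly independent in the Mahowald--Shimomura description \eqref{result_MahowaldShimomura}, your formula $\delta(v_{2}h_{20}^{k}\alpha/2)=h_{20}^{k+1}\alpha$ is wrong whenever $\alpha$ carries an odd power of $v_{3}$, and the extra term $v_{2}h_{30}\alpha$ even lands back in the source summand $v_{2}A$. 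This is exactly the point the paper's proof addresses: it computes $d(v_{2}^{2s+1}v_{3}^{t})$ mod $4$ separately for $t=2n$ and $t=2n+1$, getting $2v_{2}^{2s}v_{3}^{2n}t_{2}+\cdots$ respectively $2v_{2}^{2s}v_{3}^{2n}(v_{3}t_{2}+v_{2}t_{3})+\cdots$, and then \emph{redefines} the representative of $v_{3}h_{20}$ to be $[v_{3}t_{2}+v_{2}t_{3}]$ in the odd case, a change of basis without which the displayed sequence is not exact summand by summand. You flag this as ``bookkeeping'' to be checked, but checking it is the entire content of the proof, and your proposed mechanism (naive multiplicativity) cannot produce the correction term.

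Two smaller points. First, your remark about $\delta(a_{2}^{2^{i}s}/2^{i+2})=a_{2}^{2^{i}s}h_{21}$ is irrelevant here: the paper opens the proof by observing that every $v_{2}$-exponent in $(v_{2}h_{20}^{k}/2)A$ is odd, and $d(v_{2}^{2s+1}\cdots)\equiv 2v_{2}^{2s}(\cdots)t_{2}\not\equiv 0$ mod $4$, so these classes have order exactly two and no higher-divisibility tower enters this summand at all; that observation is also what justifies writing the middle terms as $(v_{2}h_{20}^{k}/2)A$ rather than something more divisible, which your proposal never addresses. Second, the identity $d(v_{2}^{2s+1}v_{3}^{t}x)=d(v_{2}^{2s+1}v_{3}^{t})\ox x$ used in the paper depends on factoring \emph{all} powers of $v_{3}$ out of $x$ first, precisely to isolate the one problematic generator; your formulation leaves $v_{3}$ inside $\alpha$, which is where the argument breaks.
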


\begin{proof}
Notice that each exponent of $v_{2}$
in $(v_{2}h_{20}^{k}/2)A$ is odd. 
Since we have $d(x)=0$ for $x \in A$ in the cobar complex, 
we have 
\begin{eqnarray*}
d(v_{2}^{2s+1}v_{3}^{t}x)
     & = & d(v_{2}^{2s+1}v_{3}^{t}) \ox x  . 
\end{eqnarray*}
We see that 
\[
d(v_{2}^{2s+1}v_{3}^{t})= 
\left\{
\begin{array}{ll}
2v_{2}^{2s}v_{3}^{2n}t_{2}
+ \cdots   & \mbox{for $t=2n$}  ,  \\
d(v_{2}^{2s+1}v_{3}^{t})= 2v_{2}^{2s}v_{3}^{2n}(v_{3}t_{2}+v_{2}t_{3})
+ \cdots   & \mbox{for $t=2n+1$}.  
\end{array}
\right.
\] 
In both cases we obtain 
\begin{eqnarray*}
\delta 
\left(
\Frac{v_{2}^{2s+1}v_{3}^{t} x}{2}
\right)
     & = & v_{2}^{2s}v_{3}^{t}h_{20} x  
\end{eqnarray*}
replacing $v_{3}h_{20}$ by 
$v_{3}h_{20}=[v_{3}t_{2}+v_{2}t_{3}]$
only for the case $t=2n+1$. 
\end{proof}

By the above three lemmas, we obtain the chart of differentials 

$\begin{diagram}
\node{\fbox{$v_{3}B$}}
\arrow{e}
  \node{h_{30}B}               \\
  \node[2]{\fbox{$v_{3}h_{30}B$}}
  \arrow{e}
    \node{h_{30}^{2}B}         \\
    \node[3]{\fbox{$v_{3}h_{30}^{2}B$}}
    \arrow{e}
      \node{h_{30}^{3}B}       \\
    \node[3]{\fbox{$v_{3}h_{30}h_{31}B$}}
    \arrow{e}
      \node{v_{3}h_{30}^{3}B}  
\end{diagram}$

$\begin{diagram}
\node{\fbox{$B$}}
\arrow{e}
\arrow{se}
  \node{h_{21}B}
  \arrow{se}                   \\
  \node[2]{\fbox{$h_{31}B$}}
  \arrow{e}
    \node{v_{3}h_{21}h_{30}B}  \\
  \node[2]{\fbox{$v_{3}B \{ h_{21}, h_{31} \}$}}
  \arrow{e}
    \node{h_{30}B \{ h_{21}, h_{31} \}}  
\end{diagram}$

$\begin{diagram}
\node{\fbox{$v_{2}A$}}
\arrow{e}
  \node{h_{20}A}               \\
  \node[2]{\fbox{$v_{2}h_{20}A$}}
  \arrow{e}
    \node{h_{20}^{2}A}         \\
    \node[3]{\fbox{$v_{2}h_{20}^{2}A$}}
    \arrow{e}
      \node{h_{20}^{3}A}       \\
    \node[3]{\vdots}           
\end{diagram}$

Thus we conclude the following:

\begin{lem}\label{Ext_p=2}
$\Ext_{\Sigma(2,2)}(E_{1}(2)_{*},E_{1}(2)_{*}/(2^{\infty},v_{1}))$
is the tensor product of 
$\Lambda(\rho_{2})$
and the direct sum of 

\begin{enumerate}
\item
$v_{2}A[h_{20}]$, 
$v_{3}B[h_{30}]/(h_{30}^{3})$
and 
$v_{3}Bh_{30}h_{31}$
whose elements are of order two, \item
$M^{0}$
and
$M^{1}$.
\end{enumerate}
\end{lem}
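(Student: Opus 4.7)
The plan is to assemble $\Ext_{\Sigma(2,2)}(E_{1}(2)_{*}, E_{1}(2)_{*}/(2^{\infty}, v_{1}))$ from the three long exact sequences of Lemmas \ref{summand_1}--\ref{summand_3}, which cover the four summands in the decomposition of $\Ext(E_{1}(2)_{*}/(2, v_{1}))$ displayed just before Lemma \ref{summand_1}. Since $\rho_{2}$ is a permanent cycle whose representative $v_{2}^{-5}t_{4} + v_{2}^{-10}t_{4}^{2}$ is inert under every $\delta$ encountered, it factors out of the computation, so it suffices to treat each summand modulo $\Lambda(\rho_{2})$ and tensor at the end.

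For each long exact sequence, I would apply the standard recipe: classes in the kernel of $\delta$ lift back to order-two classes in $\Ext(E_{1}(2)_{*}/(2^{\infty}, v_{1}))$, while consecutive arrows in the chart of differentials indicate higher divisibility by $2$. From Lemma \ref{summand_2}, the arrows $v_{3}h_{30}^{k}B \to h_{30}^{k+1}B$ and $v_{3}h_{30}h_{31}B \to v_{3}h_{30}^{3}B$ are all nonzero, so the boxed classes are of order exactly two; assembled, they give $v_{3}B[h_{30}]/(h_{30}^{3}) \oplus v_{3}Bh_{30}h_{31}$. An analogous but simpler argument for Lemma \ref{summand_3} shows that $v_{2}A[h_{20}]$ is also of order two.

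The substance lies in Lemma \ref{summand_1}, where the divisibility in $\Ext(E_{1}(2)_{*}/(2^{\infty}, v_{1}))$ is unbounded. Using the formula $\delta(a_{i}^{2^{n}s}/2^{n+2}) = a_{i}^{2^{n}s}h_{i,1}$ for odd $s$, one determines the maximal power of $2$ by which each monomial $a_{2}^{2^{i}s}a_{3}^{2^{j}t}$ can be divided, and the bookkeeping is organized by the splitting $B = \bigoplus_{i \ge 0} P_{i} \oplus \bigoplus_{j > 0} Q_{j}$. The surviving divisible classes form exactly $M^{0}$, with the $\Q/\ints_{(2)}$ summand coming from classes of infinite divisibility, while the companion classes paired with $h_{21}$ or $h_{31}$ give $M^{1}$.

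The main obstacle will be compatibly tracking the relation $h_{21}h_{31} = v_{2}^{-1}v_{3}^{2}h_{20}h_{21} + v_{2}^{2}h_{30}^{2} + v_{2}h_{20}h_{31}$ across the summands: the $\delta$-images of classes from the $B\{h_{21}, h_{31}\}$ piece treated by Lemma \ref{summand_1} land in combinations that include $h_{30}^{2}$ terms appearing in the summand of Lemma \ref{summand_2}. One must verify that these images cancel cleanly with the boundaries already produced there, so that no class is double-counted and no extra classes appear. Once this compatibility is checked, direct assembly of the three sets of results yields the claimed description.
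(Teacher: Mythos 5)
Your proposal is correct and follows essentially the same route as the paper, which deduces the lemma directly from \fullref{summand_1}, \fullref{summand_2} and \fullref{summand_3} together with the chart of differentials, with $\Lambda(\rho_{2})$ factored out. The compatibility issue you flag concerning $h_{21}h_{31}$ is already taken care of before the decomposition, since the paper replaces $h_{21}h_{31}$ and $h_{21}h_{30}h_{31}$ by $h_{30}^{2}$ and $h_{30}^{3}$ in the chosen $\ints/2$--basis, so the four summands are disjoint and no double counting arises.
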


Let $E_{\infty}^{*}(X)$ for a spectrum $X$ denote the $E_{\infty}$--term of 
the $E(2)$--based
Adams spectral sequence converging to the homotopy groups 
$\pi_{*}(L_2X)$.

\begin{thm}\label{homotopygroup_L2T1/2inftyv_1}
The $E_{\infty}$--term $E_{\infty}^{*}(L_2T(1)/(2^{\infty}, v_1))$ is
the tensor product of 
$\Lambda(\rho_{2})$
and the direct sum of 

\begin{enumerate}
\item
$\widetilde{v_{2}A[h_{20}]}$, 
$v_{3}B[h_{30}]/(h_{30}^{3})$
and 
$v_{3}Bh_{30}h_{31}$
whose elements are of order two,
\item
$M^{0}$
and
$M^{1}$, 
\end{enumerate}
where $\widetilde{v_{2}A[h_{20}]}$ denotes the module 
\[
\bigl(
\ints/2[v_{2}^{\pm 2}, v_{3}^{4}] 
\ox \Lambda(v_{3})
\ox
\bigl(
\ints/2[h_{30}]/(h_{30}^{4}) \oplus \ints/2\{ h_{21},h_{31} \} \ox 
\Lambda(h_{30})
\bigr)
\bigr)
[h_{20}]/(h_{20}^{3}) .
\]
\end{thm}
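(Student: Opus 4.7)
The plan is to start from the $E_2$--term described in \fullref{Ext_p=2} and pass to $E_\infty$ by invoking the $E(2)$--based Adams differentials recorded in \cite{ms}. Since the $E_2$--term has $\Lambda(\rho_2)$ as a tensor factor and $\rho_2$ is a permanent cycle of degree zero (as visible from the representative given in \eqref{result_MahowaldShimomura}), the analysis reduces to each summand listed in \fullref{Ext_p=2}(1) and (2), tensored trivially with $\Lambda(\rho_2)$.

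Comparing \fullref{Ext_p=2} with the statement of \fullref{homotopygroup_L2T1/2inftyv_1}, the summands $v_3 B[h_{30}]/(h_{30}^3)$, $v_3 B h_{30} h_{31}$, $M^0$, and $M^1$ appear unchanged; the task for these is to show that their elements are permanent cycles. For the two order--two summands in (1), I would use the vanishing line for the Ext of $\Sigma(2,2)$ combined with the base differentials of \cite{ms} on the mod--$2$ reduction to rule out nontrivial $d_r$'s. The summands $M^0$ and $M^1$ in (2) consist of classes of the form $x/2^k$ produced by the connecting homomorphism for multiplication by $2$, and by naturality of the Adams spectral sequence any $d_r$ on such a class would be detected in the corresponding mod--$2$ computation of \cite{ms}, where no such differential is recorded.

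The only substantive change occurs in the summand $v_2 A[h_{20}]$, which is truncated to $\widetilde{v_2 A[h_{20}]}$ by two modifications: inside $B = \ints/2[v_{2}^{\pm 2}, v_{3}^{2}]$ the polynomial subalgebra $\ints/2[v_{3}^{2}]$ is cut down to $\ints/2[v_{3}^{4}]$, and the $h_{20}$--polynomial algebra is truncated at $h_{20}^{3}$. Both arise from a Mahowald--Shimomura differential of the shape $d_r(v_{3}^{2} \cdot x) = u \cdot v_{2} h_{20}^{3} \cdot x$ for a unit $u$, applied via the Leibniz rule through $A$: it kills simultaneously the odd--exponent powers $v_{3}^{2(2s+1)}$ (halving the $v_{3}^{2}$--algebra) and the classes $v_{2} h_{20}^{k}$ for $k \ge 3$ (truncating the $h_{20}$--algebra).

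The main obstacle is verifying that the differentials from \cite{ms} propagate correctly through the decomposition of \fullref{Ext_p=2} and that no new differentials arise in passing from the mod--$2$ setting of \cite{ms} to our $(2^{\infty}, v_{1})$--setting. Naturality of the $E(2)$--based Adams spectral sequence with respect to the short exact sequence defining the connecting homomorphism \eqref{connectinghomo} for $(m,n)=(1,2)$ handles this, provided one checks that each class in the stated $E_\infty$--term lifts to a representative whose mod--$2$ reduction is a permanent cycle in \cite{ms}; this is a routine but careful bookkeeping of bidegrees and of the action of $v_{2}, v_{3}, h_{20}, h_{21}, h_{30}, h_{31}$ on the surviving submodules.
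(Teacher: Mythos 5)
Your proposal follows essentially the same route as the paper's proof: start from the $E_2$--term of \fullref{Ext_p=2}, import the $d_3$--differentials of Mahowald--Shimomura \cite{ms}, observe that only the summand $v_2A[h_{20}]$ is affected (producing $\widetilde{v_2A[h_{20}]}$ by killing the odd powers of $a_3=v_3^2$ and truncating at $h_{20}^3$), and conclude using the horizontal vanishing line. One small correction: the relevant differential is $d_3(v_3^{2})=v_2^{2}h_{20}^{3}$ rather than $v_2h_{20}^{3}$ (only the former matches bidegrees and makes $d_3(wa_3^{2t+1}/2)=wa_3^{2t}h_{20}^{3}/2$ land back inside $v_2A[h_{20}]$, whose elements carry odd $v_2$--exponent); with that adjustment your truncation argument is exactly the one in the paper.
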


\begin{proof}
In \cite{ms}, the 
differentials of $E(2)$--based Adams  spectral sequence for 
$L_{2}T(1)/I_{2}$ (written as $D$ in \cite{ms}) are determined as 
\[
d_{3}(v_{3}) = 0
\quad
\mbox{and}
\quad
d_{3}(v_{3}^{k}) = v_{2}^{2}v_{3}^{k-2}h_{20}^{3}
\quad
\mbox{for $2 \le k \le 3$,}
\]
and 
$d_{3}(v_{3}^{k}x)=d_{3}(v_{3}^{k})x$
for $x= h_{20}$, $h_{21}$, $h_{30}$ and $h_{31}$.
Note that for each element $w a_{3}^{2t+1} \in v_{2}A[h_{20}]$, 
we see that 
\[
d_{3}(w a_{3}^{2t+1}/2)
= w a_{3}^{2t}h_{20}^{3}/2 \in v_{2}A[h_{20}].  
\]
This shows the structure of $\pi_{*}(L_{2}T(1)/(2^{\infty},v_{1}))$, 
since it has a horizontal vanishing line. 
\end{proof}

\begin{proof}[Proof of \fullref{intro3}]
Consider the cofiber sequence 
\[
\begin{diagram}
\node{T(1)/(v_1)}
\arrow{e}
  \node{T(1)/(v_1) \wedge S\Q}
  \arrow{e}
    \node{T(1)/(2^{\infty},v_1) .}
\end{diagram}
\]
Then the homotopy groups of 
$T(m)/(v_1) \wedge S\Q$ 
and 
$T(1)/(2^{\infty},v_1)$ are determined in 
\cite[Corollary 6.5.6]{r:book} and \fullref{homotopygroup_L2T1/2inftyv_1}, 
respectively.
\end{proof}

\bibliographystyle{gtart}
\bibliography{link}

\end{document}